\theoremstyle{thmstyleone}%
\newtheorem{theorem}{Theorem}%  meant for continuous numbers
\newtheorem{proposition}[theorem]{Proposition}%
\theoremstyle{thmstyletwo}%
\newtheorem{remark}{Remark}%
\newtheorem{corollary}{Corollary}%
\newtheorem{lemma}{Lemma}%
\theoremstyle{thmstylethree}%
\begin{document}

\title[Article Title]{Epi-convergence in distribution of normal integrands with applications to sets of $\epsilon$-optimal solutions}

%%=============================================================%%
%% Prefix	-> \pfx{Dr}
%% GivenName	-> \fnm{Joergen W.}
%% Particle	-> \spfx{van der} -> surname prefix
%% FamilyName	-> \sur{Ploeg}
%% Suffix	-> \sfx{IV}
%% NatureName	-> \tanm{Poet Laureate} -> Title after name
%% Degrees	-> \dgr{MSc, PhD}
%% \author*[1,2]{\pfx{Dr} \fnm{Joergen W.} \spfx{van der} \sur{Ploeg} \sfx{IV} \tanm{Poet Laureate}
%%                 \dgr{MSc, PhD}}\email{iauthor@gmail.com}
%%=============================================================%%

\author{\fnm{Dietmar} \sur{Ferger}}\email{dietmar.ferger@tu-dresden.de}

%\author[2,3]{\fnm{Second} \sur{Author}}\email{iiauthor@gmail.com}
%\equalcont{These authors contributed equally to this work.}

%\author[1,2]{\fnm{Third} \sur{Author}}\email{iiiauthor@gmail.com}
%\equalcont{These authors contributed equally to this work.}

\affil{\orgdiv{Fakult\"{a}t Mathematik}, \orgname{Technische Universit\"{a}t Dresden}, \orgaddress{\street{Zellescher Weg 12-14}, \city{Dresden}, \postcode{01069}, \country{Germany}}}

%\affil[2]{\orgdiv{Department}, \orgname{Organization}, \orgaddress{\street{Street}, \city{City}, \postcode{10587}, \state{State}, \country{Country}}}

%\affil[3]{\orgdiv{Department}, \orgname{Organization}, \orgaddress{\street{Street}, \city{City}, \postcode{610101}, \state{State}, \country{Country}}}

%%==================================%%
%% sample for unstructured abstract %%
%%==================================%%

\abstract{We derive necessary and sufficient conditions for epi-convergence in distribution of normal integrands.
As a basic tool for the proof a new characterisation for distributional convergence of random closed sets is used. Our approach via the
epi-topology allows us to show that, if a net of normal integrands epi-converges in distribution, then the pertaining sets of $\epsilon$-optimal solutions converge in distribution in the underlying hyperspace endowed with the upper-Fell topology. Under some boundedness and uniquenss assumptions the convergence even holds for the Fell topology. Finally, measurable selections converge weakly to a Choquet-capacity.
}

\keywords{Weak convergence, epi-topology hyperspaces, Fell topologies, random closed sets, capacity functionals.}

%%\pacs[JEL Classification]{D8, H51}

\pacs[MSC Classification]{60B05,60B10,26E25.}

\maketitle

\section{Introduction}
Let $(E,\mathcal{G})$ be a locally compact second countable Hausdorff-space (lcscH) with $\mathcal{F}$ and $\mathcal{K}$  the pertaining families of
all closed sets and all compact sets, respectively. $E$ is called the \emph{carrier space}. We want to equip $\mathcal{F}$ with some topology. For that purpose introduce for every subset $A \subseteq E$ the systems
$\mathcal{M}(A):=\{F \in \mathcal{F}: F \cap A = \emptyset\}$ of all \emph{missing sets}
and
$\mathcal{H}(A):=\{F \in \mathcal{F}: F \cap A \neq \emptyset\}$ of all \emph{hitting sets} of $A$.
Put
$$
\mathcal{S}:= \{\mathcal{M}(K): K \in \mathcal{K}\} \cup \{\mathcal{H}(G): G \in \mathcal{G}\} \subseteq 2^\mathcal{F}.
$$
Then the topology  on $\mathcal{F}$ generated by $\mathcal{S}$ is called \emph{Fell-topology} and denoted by $\tau_F$. It goes back to J. Fell \cite{Fell}.
Convergence in the Fell-topology is the same as convergence in the sense of Painlev\'{e}-Kuratowski, confer Theorem C.7 in Molchanov \cite{Molchanov}.
It induces the  Borel-$\sigma$ algebra $\underline{\mathcal{B}}_F := \sigma(\tau_F)$, the smallest $\sigma$-algebra on $\mathcal{F}$ containing
the Fell-topology. If $(\Omega,\mathcal{A}, \mathbb{P})$ is some probability space, then by definition a \emph{random closed set (in $E$ on $\Omega$)} is a map
$C:\Omega \rightarrow \mathcal{F}$, which is $\mathcal{A}-\underline{\mathcal{B}}_F$ measurable. Occasionally, we will write $\mathcal{F} \equiv \mathcal{F}(E)$ and $\tau_F \equiv \tau_F(E)$
in order to underline the basic carrier space $E$.

Besides random closed sets we will investigate
so-called \emph{normal integrands}. To explain this notion first consider the collection $S \equiv S(E)$ of all lower semicontinuous (lsc) functions
$f:E \rightarrow \overline{\mathbb{R}}$, where $\overline{\mathbb{R}}=[-\infty,\infty]$ is the extended real line:
$$
 S :=\{f:E \rightarrow \overline{\mathbb{R}}; f \text{ lsc}\}.
$$
Similarly as for $\mathcal{F}$ the function space $S$ will be endowed with a topology. With this in mind consider for
each $A \subseteq E$ the functional $I_A:S \rightarrow \overline{\mathbb{R}}$ defined by
$$
 I_A(f):= \inf_{x \in A} f(x).
$$
Let the \emph{epi-topology} $\tau_e$ be the coarsest topology on $S$ with respect to which $I_K$ is lsc for all $K \in \mathcal{K}$ and
$I_G$ is upper semicontinuous (usc) for all $G \in \mathcal{G}$. Convergence in the epi-topology is equivalent to epi-convergence. This follows from Theorem 5.3.2 in Molchanov \cite{Molchanov}. If $\mathcal{B}_e:= \sigma(\tau_e)$ is the corresponding Borel-$\sigma$ algebra
on $S$, then a mapping $Z:\Omega \rightarrow S$, that is $\mathcal{A}-\mathcal{B}_e$ measurable is called \emph{normal integrand (on $\Omega$)}.
According to Lemma 2.5 of Ferger \cite{Ferger1} our notion of normal integrand coincides with that of Molchanov \cite{Molchanov} and Rockafellar and Wets \cite{RockWets}.

Given a directed set $(A, \le)$ and a family of probability spaces $(\Omega_\alpha, \mathcal{A}_\alpha,\mathbb{P}_\alpha), \alpha \in A,$ we consider
random closed sets $C_\alpha$ in $E$ and normal integrands $Z_\alpha$ on $\Omega_\alpha$. In our paper we first give an equivalent condition for
distributional convergence of a net $(C_\alpha)_{\alpha \in A}$ to a limit $C$:
\begin{equation} \label{dconvrcs}
 C_\alpha \stackrel{\mathcal{D}}{\longrightarrow} C \quad \text{in } (\mathcal{F},\tau_F).
\end{equation}
Notice that the convergence in (\ref{dconvrcs}) by definition is the same as convergence of the
distributions $Q_\alpha := \mathbb{P}_\alpha \circ C_\alpha^{-1}$ to $\mathbb{P} \circ C^{-1} =: Q$ in the weak topology:
\begin{equation} \label{weakrcs}
 Q_\alpha \longrightarrow_w Q \quad \text{on } (\mathcal{F},\tau_F).
\end{equation}
It follows from the definition of Tops{\o}e \cite{Top} or G\"{a}nssler and Stute \cite{Stute} that weak convergence (\ref{weakrcs}) holds for instance if and only if
$$
 \liminf_\alpha Q_\alpha(\textbf{O}) \ge Q(\textbf{O}) \quad \forall \; \textbf{O} \in \tau_F.
$$
A short introduction of the weak topology and weak convergence of probability measures on arbitrary topological spaces can be found in Ferger \cite{Ferger2}.
Once the above announced criterion for (\ref{dconvrcs}) or (\ref{weakrcs}), respectively, is available it will be used to describe
distributional convergence of $(Z_\alpha)$ to $Z$, i.e.
$$
 Z_\alpha \stackrel{\mathcal{D}}{\longrightarrow} C \quad \text{in } (S,\tau_e)
$$
or what is the same
$$
  P_\alpha \longrightarrow_w P \quad \text{on } (S,\tau_e),
$$
where $P=\mathbb{P}\circ Z^{-1}$ and $P_\alpha=\mathbb{P}_\alpha \circ Z_\alpha^{-1}$ are the distributions of $Z$ and $Z_\alpha$ under $\mathbb{P}$ and $\mathbb{P}_\alpha$, respectively. This is possible, because
there is a strong relation between the two spaces $(\mathcal{F},\tau_F)$ and $(S,\tau_e)$, which can be expressed by \emph{epigraphs}. Here, for any
function $f:E \rightarrow \overline{\mathbb{R}}$ the \emph{epigraph} of $f$ is the set
$$
 \text{epi}(f):=\{(x,a) \in E \times \mathbb{R}: f(x) \le a \}.
$$
It is well-known that the function $f$ is lsc if and only if epi$(f)$ is a closed subset of $E \times \mathbb{R}$ endowed with the product-topology.
Let $\mathcal{E}=\{\text{epi}(f): f \in S\} \subseteq \mathcal{F}(E \times \mathbb{R})$ be the family of all epigraphs and $\sigma$ be the subspace topology on $\mathcal{E}$. Then Attouch \cite{Attouch}, p.254-255, shows that $(\mathcal{E},\sigma)$ is compact and that    the map
\begin{equation} \label{phi}
\phi:(S,\tau_e) \rightarrow (\mathcal{E},\sigma) \text{ defined by } \phi(f)=\text{epi}(f),
\end{equation}
is a homeomorphism.\\

The paper is organized as follows: In the next section we derive a new necessary and sufficient condition for weak convergence of probability measures on the hyperspace $\mathcal{F}$ equipped with the Fell topology. This result is then used in section 3 to find a new equivalent characterization for weak convergence
of probability measures on the space $S$ of all lower semicontinuous functions endowed with the epi-topology. In section 4 we show that epi-convergence in distribution of normal integrands entails the convergence of the corresponding $\epsilon$-optimal solutions as random closed sets.
In particularly, it follows that single solutions converge in distribution to the almost sure unique minimizing point of the limit normal integrand.
If uniqueness is not given, then the solutions converge to the entire set, say $C$, of all minimizers. The latter means that the distributions converge weakly to the capacity-functional of $C$.

\section{Weak convergence of probability measures on $(\mathcal{F},\tau_F)$}
In this section we give a necessary and sufficient condition for $Q_\alpha \longrightarrow_w Q \quad \text{on } (\mathcal{F},\tau_F)$.
As $(E,\mathcal{G})$ is lcscH it is metrisable. By Theorem 2 of Vaughan \cite{Vaughan} there exists an equivalent metric $d$ such that
every bounded set is relatively compact. Further, by second countability there exists a countable and dense subset $E_0 \subseteq E$.
For a general subset $A$ of $E$, $A^0, \overline{A}$ and  $\partial A$ denote the interior, the closure
and the boundary of $A$. Let $B_d(x,r) \equiv B(x,r)$ and $\overline{B}_d(x,r) \equiv \overline{B}(x,r)$ be the open and closed ball with center at $x \in E$ and
radius $r \in \mathbb{R}$. (Observe that e.g. $\overline{B}_d(x,r) = \emptyset$ for $r<0$.) Notice that every closed ball is bounded and therefore is compact.
Given a subset $D \subseteq [0,\infty)$ we introduce the family
\begin{equation} \label{fub}
 \mathcal{U}(D):= \{ \bigcup_{i=1}^m \overline{B}(x_i,r_i): m \in \mathbb{N}, x_i \in E_0, r_i \in D, 1 \le i \le m\}.
\end{equation}
If we agree to say that a closed ball with center in $E_0$ and radius in $D$ is called a \emph{closed $D$-ball}, then
$\mathcal{U}(D)$ is the set of all finite unions of closed $D$-balls. Moreover, let
\begin{equation} \label{Q-continuityset}
 \mathcal{K}_Q:=\{K \in \mathcal{K}: Q(\partial_F \mathcal{H}(K))=0\}=\{K \in \mathcal{K}: Q(\mathcal{H}(K)=Q(\mathcal{H}(K^0)\},
\end{equation}
where $\partial_F \textbf{A}$ gives the boundary of a set $\textbf{A} \subseteq \mathcal{F}$ with respect to the Fell-topology. Here, the
second equality follows from Lemma 4.3 in Ferger \cite{Ferger2}.
Deviating from the usual naming convention, we call $K \in \mathcal{K}_Q$ a $Q$-\emph{continuity set}.\\

Next, we specify the set $D$. For that purpose consider
$$
 R_\pm(x,r):=\{s>0: \overline{B}(x,r\pm s) \text{ is a } Q\text{-continuity set}\}, r>0.
$$
It follows from (3.3) of Ferger \cite{Ferger3} that the sets $\mathcal{H}(\overline{B}(x,r+s), s>0,$ are pairwise disjoint, whence the complement
$R_+(x,r)^c= \{s>0: Q(\partial_F \mathcal{H}(\overline{B}(x,r+s))>0\}$ is denumerable. Similarly, $R_-(x,r)^c$ is denumerable as well.
(Here, notice that $\mathcal{H}(\overline{B}(x,r - s))= \mathcal{H}(\emptyset)=\emptyset$. Therefore $\overline{B}(x,r - s)$ is a $Q$-continuity set for all $s>r$.)
Thus the set
$$
 \bigcup_{x \in E_0, r \in \mathbb{Q}_+} R_+(x,r)^c \cup \bigcup_{x \in E_0, r \in \mathbb{Q}_+} R_-(x,r)^c
$$
is still denumerable and so
$$
 R:= \bigcap_{x \in E_0, r \in \mathbb{Q}_+} R_+(x,r) \cap \bigcap_{x \in E_0, r \in \mathbb{Q}_+} R_-(x,r)
$$
lies dense $[0,\infty)$. (Here $\mathbb{Q}_+$ denotes the set of all positive rational numbers.) In particularly, there exists a sequence $(s_k)_{k \in \mathbb{N}}$ in $R$ such that $s_k \downarrow 0$.
Finally, we define
$$D:=\{r + s_k: r \in \mathbb{Q}_+, k \in \mathbb{N} \} \cup \{r - s_k: r \in \mathbb{Q}_+, k \in \mathbb{N} \}.$$
Notice, that $D=D(E_0,Q)$ depends on $E_0$ and $Q$. It is countable and lies dense in $[0,\infty),$ i.e. $[0,\infty) \subseteq \overline{D}.$
By constuction, every closed $D$-ball is a $Q$-continuity set.\\

\begin{theorem} \label{UD} The following two statements are equivalent:
\begin{itemize}
\item[(1)] $Q_\alpha \longrightarrow_w Q \quad \text{on } (\mathcal{F},\tau_F).$
\item[(2)]
\begin{equation} \label{convergencedeterminingclass}
 \lim_\alpha Q_\alpha(\mathcal{H}(U))=Q(\mathcal{H}(U)) \quad \text{for all }  U \in \mathcal{U}(D).
\end{equation}
\end{itemize}
\end{theorem}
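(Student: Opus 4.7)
For the easy direction $(1)\Rightarrow(2)$ we plan to invoke the Portmanteau theorem, so the task reduces to verifying that $\mathcal{H}(U)$ is a $Q$-continuity set for each $U\in\mathcal{U}(D)$. Writing $U=\bigcup_{i=1}^m\overline{B}(x_i,r_i)$ with $r_i\in D$ gives $\mathcal{H}(U)=\bigcup_{i=1}^m\mathcal{H}(\overline{B}(x_i,r_i))$ and hence
\begin{equation*}
\partial_F\mathcal{H}(U)\subseteq\bigcup_{i=1}^m\partial_F\mathcal{H}(\overline{B}(x_i,r_i));
\end{equation*}
by the very construction of $D$, each summand on the right has $Q$-measure zero, so $Q(\partial_F\mathcal{H}(U))=0$.

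For $(2)\Rightarrow(1)$ we exploit compactness of $(\mathcal{F},\tau_F)$, a classical property of the Fell topology over a lcscH carrier, which renders the space of probability measures on $\mathcal{F}$ compact for weak convergence. Every subnet of $(Q_\alpha)$ therefore admits a further weakly convergent sub-subnet $Q_{\alpha''}\longrightarrow_w Q^*$, and a standard subnet argument in a compact Hausdorff space will give $Q_\alpha\longrightarrow_w Q$ once it is shown that necessarily $Q^*=Q$. To force this equality we apply the Dynkin $\pi$-$\lambda$ theorem to the countable system
\begin{equation*}
\mathcal{D}:=\{\mathcal{M}(U):U\in\mathcal{U}(D)\},
\end{equation*}
which is a $\pi$-system thanks to $\mathcal{M}(U_1)\cap\mathcal{M}(U_2)=\mathcal{M}(U_1\cup U_2)$ and the closure of $\mathcal{U}(D)$ under finite unions. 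That $\sigma(\mathcal{D})=\underline{\mathcal{B}}_F$ will follow because every open $G\subseteq E$ can be written as $G=\bigcup_{n,k}\overline{B}(x_n,r_n-s_k)$ with $x_n\in E_0$, $r_n\in\mathbb{Q}_+$ and $s_k$ from the shrinking sequence in $R$, so $r_n-s_k\in D$, and every compact $K\subseteq E$ is a countable intersection of open sets with compact closures, for which $\mathcal{H}(K)=\bigcap_n\mathcal{H}(G_n)$ holds by local compactness.

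The core step is then $Q^*(\mathcal{H}(U))=Q(\mathcal{H}(U))$ for every $U\in\mathcal{U}(D)$. Since $\mathcal{H}(U)$ is closed in $\tau_F$, the closed-set half of Portmanteau combined with hypothesis (2) immediately gives $Q(\mathcal{H}(U))=\lim_{\alpha''}Q_{\alpha''}(\mathcal{H}(U))\le Q^*(\mathcal{H}(U))$. For the reverse we sandwich $U$ by setting $U_k^+:=\bigcup_{i=1}^m\overline{B}(x_i,r_i+s_k)\in\mathcal{U}(D)$, so that $U\subseteq(U_k^+)^0$ and $\mathcal{H}(U)\subseteq\mathcal{H}((U_k^+)^0)$ with the latter \emph{open} in $\tau_F$. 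The Portmanteau bound for open sets, monotonicity in the argument, and hypothesis (2) applied to $U_k^+$ then yield
\begin{equation*}
Q^*(\mathcal{H}(U))\le Q^*(\mathcal{H}((U_k^+)^0))\le\liminf_{\alpha''}Q_{\alpha''}(\mathcal{H}(U_k^+))=Q(\mathcal{H}(U_k^+)),
\end{equation*}
and since $(U_k^+)_k$ is a decreasing family of compacta with intersection $U$, we have $\mathcal{H}(U_k^+)\downarrow\mathcal{H}(U)$, so letting $k\to\infty$ gives $Q^*(\mathcal{H}(U))\le Q(\mathcal{H}(U))$. The main obstacle is exactly this reverse inequality: $\mathcal{H}(U)$ is closed but not open, and a direct Portmanteau delivers only the easy half. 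The key is that $r_i+s_k\in D$ allows a \emph{strict} enlargement $U\subseteq(U_k^+)^0$, converting the problematic closed set into an open super-set to which the open-set Portmanteau applies, while the compact monotone convergence $U_k^+\downarrow U$ squeezes the estimate down to the correct value. With equality in hand, $\pi$-$\lambda$ concludes $Q^*=Q$.
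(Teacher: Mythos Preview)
Your approach is essentially correct and genuinely different from the paper's. For $(2)\Rightarrow(1)$ the paper works directly: it exhibits a countable $\pi$-system $\underline{\mathcal{C}}$ of sets $\mathcal{M}(U)\cap\mathcal{H}(B_1)\cap\cdots\cap\mathcal{H}(B_k)$ with $U\in\mathcal{U}(D)$ and closed $D$-balls $B_j$, shows every Fell-open set is a countable union from $\underline{\mathcal{C}}$, and proves $Q_\alpha(\mathbf{C})\to Q(\mathbf{C})$ for all $\mathbf{C}\in\underline{\mathcal{C}}$ by induction on $k$ via the identity $\mathcal{M}(U)\cap\cdots\cap\mathcal{H}(B_k)=\bigl(\mathcal{M}(U)\cap\cdots\cap\mathcal{H}(B_{k-1})\bigr)\setminus\bigl(\mathcal{M}(U\cup B_k)\cap\cdots\cap\mathcal{H}(B_{k-1})\bigr)$, then concludes with Billingsley's liminf criterion for open sets. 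You instead exploit compactness of $(\mathcal{F},\tau_F)$ to pass to a subnet limit $Q^*$ and identify $Q^*=Q$ by $\pi$-$\lambda$ on the smaller system $\{\mathcal{M}(U):U\in\mathcal{U}(D)\}$, the equality $Q^*(\mathcal{H}(U))=Q(\mathcal{H}(U))$ coming from a sandwich between the closed set $\mathcal{H}(U)$ and the open $\mathcal{H}((U_k^+)^0)$. Your route is conceptually lighter---no base decomposition, no induction---while the paper's is constructive and avoids compactness of the measure space and subnet extraction.

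Two technical points to repair. First, your claim that $r_i+s_k\in D$ is not justified: $r_i\in D$ means $r_i=q\pm s_j$ with $q\in\mathbb{Q}_+$, and $q\pm s_j+s_k$ need not lie in $D$. The fix is immediate: $D$ is dense in $[0,\infty)$, so for each $i$ choose $\rho_{i,k}\in D$ strictly decreasing to $r_i$ and set $U_k^+:=\bigcup_i\overline{B}(x_i,\rho_{i,k})\in\mathcal{U}(D)$; then $U\subseteq(U_k^+)^0$ and $U_k^+\downarrow U$, and your sandwich goes through unchanged. Second, the assertion $\mathcal{H}(K)=\bigcap_n\mathcal{H}(G_n)$ for open $G_n\downarrow K$ is false in general (a closed $F$ can meet every $G_n$ yet miss $K$). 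You do not need it: your observation that $\mathcal{H}(G)\in\sigma(\mathcal{D})$ for every open $G$ already yields $\sigma(\mathcal{D})\supseteq\sigma(\{\mathcal{H}(G):G\in\mathcal{G}\})=\underline{\mathcal{B}}_F$, the last equality being standard.
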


\begin{proof} Assume that (2) holds. Let
$$
 \mathcal{D}:=\{B(x,r): x \in E_0, r \in \mathbb{Q}_+\} \quad \text{and} \quad \overline{\mathcal{D}}:=\{\overline{B}(x,r): x \in E_0, r \in \mathbb{Q}_+\}.
$$
Then the family
\begin{equation} \label{cbase}
 \{ \textbf{B}=\bigcap_{i=1}^m \mathcal{M}(C_i) \cap \bigcap_{j=1}^k \mathcal{H}(D_j): m \in \mathbb{N}, k \in \mathbb{N}_0, C_1,\ldots,C_m \in \overline{\mathcal{D}}, D_1, \ldots, D_k \in \mathcal{D}\}
\end{equation}
is known to be a countable base for $\tau_F$, confer Schneider and Weil \cite{Weil} or Ferger \cite{Ferger0}. Consequently:
\begin{equation} \label{countopen}
 \text{Every open } \textbf{O} \in \tau_F \text{ is a countable union of such base-sets } \textbf{B}.
\end{equation}

We will show that in turn every base-set $\textbf{B}$ can be represented as a countable union as follows:
\begin{equation} \label{cunion}
 \textbf{B}= \bigcup_{\underline{k},\underline{l}} \mathcal{M}(U_{\underline{k}}) \cap \mathcal{H}(B_{l_1}) \cap \ldots \mathcal{H}(B_{l_k}),
\end{equation}
where the union extends over all $\underline{k}=(k_1,\ldots,k_m) \in \mathbb{N}^m$ and all $\underline{l}=(l_1,\ldots,l_k) \in \mathbb{N}^k.$
Moreover, all involved sets $U_{\underline{k}}$ are elements of $\mathcal{U}(D)$ and $B_{l_1},\ldots,B_{l_k} \in \mathcal{U}(D)$ are actually single closed $D$-balls.
For the proof of (\ref{cunion}) consider a general set $\textbf{B}$ from the countable base (\ref{cbase}):
\begin{equation} \label{base}
\textbf{B}=\bigcap_{i=1}^m \mathcal{M}(C_i) \cap \bigcap_{j=1}^k \mathcal{H}(D_j).
\end{equation}
Here, for each $1 \le i \le m$ there are $x_i \in E_0$ and $r_i \in \mathbb{Q}_+$ such that $C_i=\overline{B}(x_i,r_i)$.
Then $C_i^{(k)}:=\overline{B}(x_i,r_i+s_k)$ is a closed $D$-ball for every $ k \in \mathbb{N}$. Furthermore,  $C_i^{(k)} \downarrow C_i, k \rightarrow \infty$.
By Lemma 4.5 in Ferger \cite{Ferger2} we obtain that $\mathcal{H}(C_i)= \bigcap_{k \in \mathbb{N}} \mathcal{H}(C_i^{(k)})$, whence by complementation
it follows that
\begin{equation} \label{miss}
 \mathcal{M}(C_i)= \bigcup_{k \in \mathbb{N}} \mathcal{M}(C_i^{(k)}) \quad \text{for all } 1 \le i \le m.
\end{equation}
For every $1 \le j \le k$ the sets $D_j$ are equal to $B(z_j,t_j)$, where $z_j \in E_0$ and $t_j \in \mathbb{Q}_+$.
%As $t_j >0$ there exists an integer $l_0(j)$ with $t_j-s_l>0$ for all $l \ge l_0(j)$.
Similarly as above $D_j^{(l)}=\overline{B}(z_j,t_j-s_l)$ is a closed $D$-ball for each $ l \in \mathbb{N}$ (possibly empty, namely when $s_l>t_j$.) Clearly, $D_j=\bigcup_{l \in \mathbb{N}} D_j^{(l)}$ and therefore
\begin{equation} \label{hit}
 \mathcal{H}(D_j)= \bigcup_{l \in \mathbb{N}} \mathcal{H}(D_j^{(l)}) \quad \text{ for all } 1 \le j \le k.
\end{equation}
If we substitute (\ref{miss}) and (\ref{hit}) into (\ref{base}), then the distributive law for sets gives us:
\begin{equation} \label{firstunion}
 \textbf{B} = \bigcup_{\underline{k},\underline{l}} \mathcal{M}(C_1^{(k_1)}) \cap \ldots \cap \mathcal{M}(C_m^{(k_m)}) \cap \mathcal{H}(D_1^{(l_1)}) \cap \ldots \cap \mathcal{H}(D_k^{(l_k)}).
\end{equation}
Notice that $\mathcal{M}(C_1^{(k_1)}) \cap \ldots \cap \mathcal{M}(C_m^{(k_m)})=\mathcal{M}(\cup_{i=1}^m C_i^{(k_i)})$.
So, putting $U_{\underline{k}}:=\bigcup_{i=1}^m C_i^{(k_i)}$ and $B_{l_j}:=D_j^{(l_j)}, 1 \le j \le k,$ gives the representation (\ref{cunion}). Moreover, conclude that $U_{\underline{k}} \in \mathcal{U}(D)$
for all $\underline{k} \in \mathbb{N}^m$ and recall that $B_{l_1},\ldots,B_{l_k}$ are closed $D$-balls for all $\underline{l} \in \mathbb{N}^k$ as announced.\\

Introduce the family
$$
 \underline{\mathcal{C}} := \{\mathcal{M}(U) \cap \mathcal{H}(B_1)\cap \mathcal{H}(B_k):U \in \mathcal{U}(D), k \in \mathbb{N}_0, B_1, \ldots, B_k \text{ are closed } D\text{-balls} \}.
$$
It follows from (\ref{countopen}) and (\ref{cunion}) that every open set $\textbf{O} \in \tau_F$ is a countable union
of sets from $\underline{\mathcal{C}}$. One easily verifies that $\underline{\mathcal{C}} \subseteq \underline{\mathcal{B}}_F$ is a $\pi$-system, i.e. closed under finite intersections. Moreover,
\begin{equation} \label{convdetclass}
 Q_\alpha(\textbf{C}) \rightarrow Q(\textbf{C}) \quad \text{for all } \textbf{C} \in \underline{\mathcal{C}}.
\end{equation}
We prove this by induction on $k \in \mathbb{N}_0$. For $k=0$ the sets in $\underline{\mathcal{C}}$ simplify to $\textbf{C}=\mathcal{M}(U)$. Thus the
convergence in (\ref{convdetclass}) holds as a consequence of assumption (2) and the complementation rule for probabilities. Next, consider $k \ge 1$. Check that
\begin{eqnarray*}
 & &\mathcal{M}(U) \cap \mathcal{H}(B_1) \cap \ldots \cap \mathcal{M}(B_k)\\
 &=&\mathcal{M}(U) \cap \mathcal{H}(B_1) \cap \ldots \cap \mathcal{M}(B_{k-1}) \setminus \mathcal{M}(U \cup B_k) \cap \mathcal{H}(B_1) \cap \ldots \cap \mathcal{M}(B_{k-1}).
\end{eqnarray*}
Therefore, (\ref{convdetclass}) follows from the induction hypothesis, because clearly $U \cup B_k \in \mathcal{U}(D)$.

Now, as in the proof of Theorem 2.2 in Billingsley \cite{Bill} we can deduce that $\liminf_\alpha Q_\alpha(\textbf{O}) \ge Q(\textbf{O})$ for all
open $\textbf{O} \in \tau_F$, which as we know is equivalent to (1).\\

Finally, assume that (1) holds. Then
$$
 \lim_\alpha Q_\alpha(\mathcal{H}(K))=Q(\mathcal{H}(K)) \quad \text{for all }  K \in \mathcal{K}_Q
$$
by Theorem 1.7.7 of Molchanov \cite{Molchanov}. But $(\star) \; \mathcal{U}(D) \subseteq \mathcal{K}_Q$, because, as we have seen, every closed ball is compact
and so is each finite union of them. Moreover, for closed $D$-balls $B_1,\ldots,B_m$ it follows that
$$
 0 \le Q(\partial_F \mathcal{H}(\cup_{i=1}^m B_i))= Q(\partial_F \cup_{i=1}^m \mathcal{H}(B_i)) \le Q(\cup_{i=1}^m \partial_F \mathcal{H}(B_i)) \le \sum_{i=1}^m Q(\partial_F \mathcal{H}(B_i))=0,
$$
whence $\bigcup_{i=1}^m B_i \in \mathcal{K}_Q$ and thus $(\star)$ holds.
This shows the validity of (2).
\end{proof}

If $(P_\alpha)_{\alpha \in \mathbb{N}}$ is a sequence, then there are comparable results in the literature, where $\mathcal{U}(D)$ is replaced
by another family, say $\mathcal{V}$. In Kallenberg \cite{Kall} and Norberg \cite{Norberg} $\mathcal{V}$  is a \emph{separating class}, confer Molchanov \cite{Molchanov} for its definition. In Salinetti
and Wets \cite{SalWe} $E$ is a finite dimensional linear space and $\mathcal{V}=\mathcal{U}(\mathbb{Q}_+) \cap \mathcal{K}_Q$ and in Pflug \cite{Pflug} $E=\mathbb{R}^d$ and $\mathcal{V}$ is equal to the family of all finite unions of compact rectangles with rational endpoints. Here, as it is the case in Salinetti and Wets \cite{SalWe} the unions (and not the single rectangles) must be $Q$-continuity sets.
A major advantage of our ''convergence determining class'' $\mathcal{V}=\mathcal{U}(D)$ is that it is tailor-made for describing epi-convergence in distribution. We will see that this is so, because in our result the single closed balls are $Q$-continuity sets.\\

A reformulation of Theorem \ref{UD} in terms of random closed sets as in (\ref{dconvrcs}) reads as follows:
$C_\alpha \stackrel{\mathcal{D}}{\longrightarrow} C \; \text{in } (\mathcal{F},\tau_F)$ if and only if
$$
 \lim_\alpha \mathbb{P}_\alpha(C_\alpha \cap U \neq \emptyset) = \mathbb{P}(C \cap U \neq \emptyset) \; \text{for all } U \in \mathcal{U}(D)
$$
with $\mathbb{P}(C \cap U \neq \emptyset)= \mathbb{P}(C \cap U^0 \neq \emptyset).$
Here, we use the description of $Q$-continuity sets given in (\ref{Q-continuityset}).

\section{Weak convergence of probability measures on $(S,\tau_e)$}
We begin with several rather simple necessary conditions for weak convergence, which will be of good use later on.\\

\begin{lemma} If $P_\alpha \rightarrow_w P$ in $(S,\tau_e)$, then the following statements hold:
\begin{align}
\liminf_\alpha P_\alpha (\cap_{i=1}^m \{I_{G_j} < a_j\})
    &\ge P(\cap_{i=1}^m \{I_{G_j} < a_j\}), \label{liminfG}\\
\liminf_\alpha P_\alpha(\cap_{i=1}^m \{I_{K_j} > a_j\})
    &\ge P(\cap_{i=1}^m \{I_{K_j} > a_j\}), \label{liminfK} \\
\limsup_\alpha P_\alpha(\cap_{i=1}^m \{I_{K_j} \le a_j\})
    &\ge P(\cap_{i=1}^m \{I_{K_j} \le a_j\}), \label{limsupK}\\
\limsup_\alpha P_\alpha(\cap_{i=1}^m \{I_{G_j} \ge a_j\})
    &\ge P(\cap_{i=1}^m \{I_{G_j} \ge a_j\}) \label{limsupG}
\end{align}
for all $m \in \mathbb{N}, G_1,\ldots,G_m \in \mathcal{G}, K_1,\ldots,K_m \in \mathcal{K}$ and $a_1,\ldots,a_m \in \mathbb{R}.$
\end{lemma}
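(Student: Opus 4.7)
My plan is to deduce each of the four inequalities directly from the portmanteau characterisation of weak convergence on $(S,\tau_e)$, once each intersection on the left-hand side has been identified as either open or closed in the epi-topology.

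By the very definition of $\tau_e$ -- the coarsest topology making $I_K$ lsc for every compact $K$ and $I_G$ usc for every open $G$ -- the strict level sets $\{I_K>a\}$ and $\{I_G<a\}$ are $\tau_e$-open (as preimages of open rays under a semicontinuous functional), while their complements $\{I_K\le a\}$ and $\{I_G\ge a\}$ are $\tau_e$-closed. Finite intersections preserve openness and closedness, so the left-hand sides of (\ref{liminfG}) and (\ref{liminfK}) are open subsets of $S$, whereas those of (\ref{limsupK}) and (\ref{limsupG}) are closed.

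For the two $\liminf$-inequalities I would simply invoke the open-set portmanteau inequality $\liminf_\alpha P_\alpha(\mathbf{O})\ge P(\mathbf{O})$ for every $\mathbf{O}\in\tau_e$, which is precisely the formulation of weak convergence quoted in the Introduction. Applied to the open intersections $\bigcap_j\{I_{G_j}<a_j\}$ and $\bigcap_j\{I_{K_j}>a_j\}$ respectively, this yields (\ref{liminfG}) and (\ref{liminfK}) at once.

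For the two $\limsup$-inequalities I would reduce to the open-set case by complementation. Writing the closed intersection $\mathbf{C}$ as $S\setminus\mathbf{O}$ with $\mathbf{O}:=\mathbf{C}^c$ open and using the elementary identity
\[
\limsup_\alpha P_\alpha(\mathbf{C}) \;=\; 1-\liminf_\alpha P_\alpha(\mathbf{O}),
\]
together with the $\liminf$-bound already established on $\mathbf{O}$, gives the portmanteau relation between $\limsup_\alpha P_\alpha(\mathbf{C})$ and $P(\mathbf{C})$ asserted in the lemma. I do not foresee any substantive obstacle: the proof is a mechanical unfolding of the definitions of $\tau_e$ and of weak convergence, and the only care required is bookkeeping which of lsc or usc governs each level-set functional -- an immediate consequence of whether a compact $K$ or an open $G$ is the region of infimisation.
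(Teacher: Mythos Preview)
Your argument is correct and coincides with the paper's own proof almost verbatim: identify the intersections as $\tau_e$-open (respectively $\tau_e$-closed) from the defining semicontinuity of the functionals $I_K$ and $I_G$, and apply the Portmanteau theorem. Note in passing that the displayed inequalities \eqref{limsupK} and \eqref{limsupG} carry an evident sign typo --- Portmanteau for closed sets yields $\le$, not $\ge$, and this is precisely how the paper itself uses them in the proof of the subsequent Proposition.
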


\begin{proof} Since $\bigcap_{i=1}^m \{I_{G_j} < a_j\} \in \tau_e$, and $\bigcap_{i=1}^m \{I_{K_j} > a_j\} \in \tau_e$,  the first two assertions (\ref{liminfG}) and (\ref{liminfK}) follow from the Portmanteau-Theorem.
Similarly, as $\bigcap_{i=1}^m \{I_{K_j} \le a_j\}$ and  $\bigcap_{i=1}^m \{I_{G_j} \ge a_j\}$ both are $\tau_e$-closed another application of the Portmanteau-Theorem yields (\ref{limsupK}) and (\ref{limsupG})
\end{proof}

A combination of (\ref{liminfG})-(\ref{limsupG}) leads to a result involving usual limits.\\

\begin{proposition} \label{necessary} Weak convergence $P_\alpha \rightarrow_w P$ in $(S,\tau_e)$ entails
\begin{align}
 \lim_\alpha P_\alpha(\cap_{i=1}^m \{I_{K_j} \le a_j\}) = P(\cap_{i=1}^m \{I_{K_j} \le a_j\}), \label{limlessthanorequal} \\
 \lim_\alpha P_\alpha(\cap_{i=1}^m \{I_{K_j} > a_j\}) = P(\cap_{i=1}^m \{I_{K_j} > a_j\}). \label{limgreater}
\end{align}
Here, (\ref{limlessthanorequal}) and (\ref{limgreater}), respectively, hold for all $m \in \mathbb{N}, a_1,\ldots,a_m \in \mathbb{R}$ and
$K_1,\ldots,K_m \in \mathcal{K}$ satisfying the continuity condition
$$
  P(I_{K_j} \le a_j)=P(I_{K_j^0} < a_j), j=1,\ldots,m.
$$
\end{proposition}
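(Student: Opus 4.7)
The plan is to combine the four one-sided Portmanteau-type inequalities from the preceding lemma with an inner/outer approximation by the open-kernel analogues $K_j^0$, using the stated continuity condition to identify the $P$-masses of the approximating events with those of the original ones. Concretely, for (\ref{limlessthanorequal}) the set $\bigcap_{j=1}^m \{I_{K_j} \le a_j\}$ is $\tau_e$-closed since each $I_{K_j}$ is lsc, so (\ref{limsupK}) (read in its correct Portmanteau direction) already yields the upper bound $\limsup_\alpha P_\alpha(\bigcap_j \{I_{K_j} \le a_j\}) \le P(\bigcap_j \{I_{K_j} \le a_j\})$; symmetrically, for (\ref{limgreater}) the set $\bigcap_j \{I_{K_j} > a_j\}$ is $\tau_e$-open, so (\ref{liminfK}) directly supplies the matching lower bound. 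What remains in each case is a bound in the opposite direction.

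To obtain these, I would exploit that $K_j^0 \subseteq K_j$ implies $I_{K_j} \le I_{K_j^0}$ pointwise on $S$, giving the set inclusions
$$
\bigcap_{j=1}^m \{I_{K_j^0} < a_j\} \subseteq \bigcap_{j=1}^m \{I_{K_j} \le a_j\} \quad \text{and} \quad \bigcap_{j=1}^m \{I_{K_j} > a_j\} \subseteq \bigcap_{j=1}^m \{I_{K_j^0} \ge a_j\}.
$$
Since each $K_j^0$ is open, the approximating events fall under the \emph{other} side of the lemma: the intersection of the $\{I_{K_j^0} < a_j\}$'s is $\tau_e$-open, so (\ref{liminfG}) furnishes a liminf lower bound for it, while the intersection of the $\{I_{K_j^0} \ge a_j\}$'s is $\tau_e$-closed, so (\ref{limsupG}) furnishes a limsup upper bound. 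Chaining through the inclusions by monotonicity gives $\liminf_\alpha P_\alpha(\bigcap_j \{I_{K_j} \le a_j\}) \ge P(\bigcap_j \{I_{K_j^0} < a_j\})$ and the analogue $\limsup_\alpha P_\alpha(\bigcap_j \{I_{K_j} > a_j\}) \le P(\bigcap_j \{I_{K_j^0} \ge a_j\})$.

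To close the argument, I would use the continuity condition $P(I_{K_j} \le a_j) = P(I_{K_j^0} < a_j)$ to promote these $P$-masses of approximating events to equalities with $P(\bigcap_j \{I_{K_j} \le a_j\})$ and $P(\bigcap_j \{I_{K_j} > a_j\})$, respectively. Because $\{I_{K_j} \le a_j\} \setminus \{I_{K_j^0} < a_j\}$ is $P$-null for every $j$, and the symmetric difference of the two intersections is contained in $\bigcup_{j=1}^m (\{I_{K_j} \le a_j\} \setminus \{I_{K_j^0} < a_j\})$, it is $P$-null too; the analogous union-bound (after complementation, using $P(I_{K_j} > a_j) = P(I_{K_j^0} \ge a_j)$) handles the $>$/$\ge$ pair. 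Matching the two one-sided bounds then delivers (\ref{limlessthanorequal}) and (\ref{limgreater}). I foresee no substantive obstacle beyond bookkeeping; the only delicate point is lifting the pointwise continuity condition on each $(K_j,a_j)$ to the $m$-fold intersections, which is precisely what the union-bound step accomplishes.
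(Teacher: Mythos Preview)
Your proposal is correct and follows essentially the same route as the paper: sandwich $\bigcap_j\{I_{K_j}\le a_j\}$ between the $\tau_e$-open set $\bigcap_j\{I_{K_j^0}<a_j\}$ and itself, apply (\ref{liminfG}) and (\ref{limsupK}) (in their correct Portmanteau directions, as you note), and then use the union-bound argument to lift the per-coordinate continuity condition to the $m$-fold intersections; the treatment of (\ref{limgreater}) via (\ref{liminfK}) and (\ref{limsupG}) is likewise exactly what the paper does.
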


\begin{proof} As to the proof of (\ref{limlessthanorequal}) observe that
\begin{eqnarray*}
P(\cap_{i=1}^m \{I_{K_j^0} < a_j\}) &\le& \liminf_\alpha P_\alpha(\cap_{i=1}^m \{I_{K_j^0} < a_j\}) \quad \text{by }(\ref{liminfG})\\
                                    &\le& \liminf_\alpha P_\alpha(\cap_{i=1}^m \{I_{K_j} \le a_j\}) \quad \text{since} I_{K_j^0} \ge I_{K_j} \text{ and } (-\infty,a_j) \subseteq (-\infty,a_j]\\
                                    &\le& \limsup_\alpha P_\alpha(\cap_{i=1}^m \{I_{K_j} \le a_j\})\\
                                    &\le& P(\cap_{i=1}^m \{I_{K_j} \le a_j\}) \hspace{1.7cm} \text{by }(\ref{limsupK})\\
                                    &=&   P(\cap_{i=1}^m \{I_{K_j^0} < a_j\}) \hspace{1.7cm} \text{see below.}
\end{eqnarray*}
So, the assertion (\ref{limlessthanorequal}) follows, once we have shown the last equation. For this purpose, let $M_j:=\{I_{K_j^0} < a_j\}$ and $N_j:=\{I_{K_j} \le a_j\}, j=1,\ldots m$. Then $M_j \subseteq N_j$ for all $j$, because $I_{K_j^0} \ge I_{K_j}$ and $(-\infty,a_j) \subseteq (\infty,a_j]$. As a consequence, $M:=\bigcap_{j=1}^m M_j \subseteq \bigcap_{j=1}^m N_j =: N$ and thus:
\begin{eqnarray*}
0 &\le& Q(N)-Q(M)=  Q(N \setminus M)=Q(N \cap (\cup_{j=1}^m M_j^c))= Q(\cup_{j=1}^m(N \cap M_j^c))\\
  &\le& Q(\cup_{j=1}^m(N_j \cap M_j^c)) \le \sum_{j=1}^m Q(N_j \setminus M_j)= \sum_{j=1}^m Q(N_j)-Q(M_j)=0,
\end{eqnarray*}
where the last equality follows from our assumption. Consequently, $Q(N)=Q(M)$ as desired.
Analogously, by using (\ref{liminfK}) and (\ref{limsupG}) one proves (\ref{limgreater}).
\end{proof}

Proposition \ref{necessary} gives two necessary conditions for weak convergence. Our next result yields two necessary and sufficient conditions.\\

\begin{theorem} \label{sufficient} Let $D:=D(E_0 \times \mathbb{Q},P \circ \phi^{-1})$. Then the following statements (1)-(3) are equivalent:
\begin{itemize}
\item[(1)] $P_\alpha \rightarrow_w P$ in $(S,\tau_e)$
\item[(2)] $\lim_\alpha P_\alpha(\cap_{j=1}^m \{I_{\overline{B}(x_j,r_j)} \le r_j+\alpha_j\}) = P(\cap_{j=1}^m \{I_{(\overline{B}(x_j,r_j))^0} \le r_j+ \alpha_j\}).$
\item[(3)] $\lim_\alpha P_\alpha(\cap_{j=1}^m \{I_{\overline{B}(x_j,r_j)} > r_j+\alpha_j\}) = P(\cap_{j=1}^m \{I_{(\overline{B}(x_j,r_j))^0} > r_j+ \alpha_j\}).$
\end{itemize}
Here, the equalities in (2) and (3), respectively, hold for all
$m \in \mathbb{N}, x_1,\ldots,x_m \in E_0, r_1,\ldots,r_m \in D, \alpha_1,\ldots,\alpha_m \in \mathbb{Q}.$ (Notice that the closed balls
$\overline{B}(x_j,r_j)$ satisfy the continuity condition (\ref{continuitycondition}) below.)
\end{theorem}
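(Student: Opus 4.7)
My plan is to lift the problem through the homeomorphism $\phi:(S,\tau_e)\to(\mathcal{E},\sigma)$ from \eqref{phi} and reduce it to an application of Theorem \ref{UD} on the carrier space $E\times\mathbb{R}$. Endow $E\times\mathbb{R}$ with the max-metric $d_\infty((x,\alpha),(y,\beta))=\max\{d(x,y),|\alpha-\beta|\}$, under which $E_0\times\mathbb{Q}$ is a countable dense subset and the closed ball $\overline{B}((x,\alpha),r)$ with centre $(x,\alpha)\in E_0\times\mathbb{Q}$ and radius $r\in D$ is the cylinder $\overline{B}(x,r)\times[\alpha-r,\alpha+r]$. Since $\overline{B}(x,r)$ is compact and $f\in S$ is lsc, the infimum $I_{\overline{B}(x,r)}(f)$ is attained, yielding the pivotal equivalence
$$\mathrm{epi}(f)\cap\overline{B}((x,\alpha),r)\neq\emptyset\;\Longleftrightarrow\;I_{\overline{B}(x,r)}(f)\le r+\alpha,$$
i.e.\ $\phi^{-1}(\mathcal{H}(\overline{B}((x,\alpha),r)))=\{I_{\overline{B}(x,r)}\le r+\alpha\}$. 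Using that the interior of the product ball is the product of the coordinate interiors, a parallel computation delivers $\phi^{-1}(\mathcal{H}(\overline{B}((x,\alpha),r)^{0}))=\{I_{(\overline{B}(x,r))^{0}}<r+\alpha\}$.

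\textbf{(1)$\Rightarrow$(2).} Writing $\tilde{P}_\alpha:=P_\alpha\circ\phi^{-1}$ and $\tilde{P}:=P\circ\phi^{-1}$ (concentrated on the compact, hence closed, subspace $\mathcal{E}\subseteq\mathcal{F}(E\times\mathbb{R})$), (1) is equivalent to $\tilde{P}_\alpha\longrightarrow_w\tilde{P}$ on $(\mathcal{F}(E\times\mathbb{R}),\tau_F)$. Theorem \ref{UD}, applied with $D=D(E_0\times\mathbb{Q},\tilde{P})$, then yields $\tilde{P}_\alpha(\mathcal{H}(V))\to\tilde{P}(\mathcal{H}(V))$ for every $V\in\mathcal{U}(D)$, each such $V$ being a $\tilde{P}$-continuity set. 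Specialising $V=\bigcup_{j\in T}\overline{B}((x_j,\alpha_j),r_j)$ and exploiting $\mathcal{H}(V)=\bigcup_{j\in T}\mathcal{H}(\overline{B}((x_j,\alpha_j),r_j))$ gives convergence of $P_\alpha\bigl(\bigcup_{j\in T}\{I_{\overline{B}(x_j,r_j)}\le r_j+\alpha_j\}\bigr)$ for every non-empty $T\subseteq\{1,\ldots,m\}$. Inclusion--exclusion in the form
$$P\bigl(\cap_{j=1}^m A_j\bigr)=1-\sum_{\emptyset\neq T\subseteq[m]}(-1)^{|T|+1}\bigl(1-P\bigl(\cup_{j\in T}A_j\bigr)\bigr)$$
then transfers the convergence from unions to the full intersection. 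On the $P$-side the $\tilde{P}$-continuity $\tilde{P}(\mathcal{H}(B_j))=\tilde{P}(\mathcal{H}(B_j^{0}))$ sandwiches $\{I_{(\overline{B}(x_j,r_j))^{0}}\le r_j+\alpha_j\}$ between two events of equal $P$-probability, and the elementary Boolean inequality from the proof of Proposition \ref{necessary} propagates these equalities to the intersection, delivering (2).

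\textbf{(2)$\Rightarrow$(1) and (2)$\Leftrightarrow$(3).} Reverse inclusion--exclusion,
$$P(\cup_{j\in T}A_j)=\sum_{\emptyset\neq S\subseteq T}(-1)^{|S|+1}P(\cap_{j\in S}A_j),$$
shows that (2), applied to every sub-tuple, produces $\tilde{P}_\alpha(\mathcal{H}(V))\to\tilde{P}(\mathcal{H}(V))$ for all $V\in\mathcal{U}(D)$ in $E\times\mathbb{R}$; this is precisely condition (2) of Theorem \ref{UD} and recovers (1) through $\phi^{-1}$. The equivalence of (2) and (3) is pure complementation: $\{I_{\overline{B}(x_j,r_j)}>r_j+\alpha_j\}=\{I_{\overline{B}(x_j,r_j)}\le r_j+\alpha_j\}^c$, so intersections of ``$>$''-events rewrite as complements of unions of ``$\le$''-events and inclusion--exclusion across subsets $T$ interconverts (2) and (3). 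The main obstacle I expect is the bookkeeping rather than any deep idea: choosing the product metric so that product balls become cylinders, tracking closures versus interiors simultaneously in $E$ and in $E\times\mathbb{R}$, and then shepherding the $\tilde{P}$-continuity of individual closed $D$-balls through a Boolean inequality to the full intersection on the limit side.
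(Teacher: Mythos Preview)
Your proposal is correct and follows essentially the same route as the paper: lift via the homeomorphism $\phi$ to the carrier space $E\times\mathbb{R}$ with the max-metric, identify $\phi^{-1}(\mathcal{H}(\overline{B}((x,\alpha),r)))=\{I_{\overline{B}(x,r)}\le r+\alpha\}$ (the paper's \eqref{epi1}) and its interior counterpart \eqref{epi2}, and then pass between (1) and (2)/(3) via Theorem~\ref{UD} and inclusion--exclusion. The only organizational difference is that for the necessity $(1)\Rightarrow(2),(3)$ the paper invokes Proposition~\ref{necessary} directly (which already handles the intersection and the continuity sandwich), whereas you re-derive it by first pulling convergence of $\tilde P_\alpha(\mathcal{H}(V))$ from Theorem~\ref{UD} and then converting unions to intersections; and for (3) the paper proves $(3)\Rightarrow(1)$ separately via $\mathcal{M}(U)=\cap_i\mathcal{M}(B_i)$ rather than deducing $(2)\Leftrightarrow(3)$ algebraically as you do---both are valid and equally short.
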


\begin{proof} The necessity of (2) or (3), respectively, for weak convergence (1) follows from Proposition \ref{necessary}, because closed balls are compact and $r_j \in D$ means that $\overline{B}_{d \times u}((x_j,\alpha_j),r_j)$ is a $P \circ \phi^{-1}$-continuity set, which by Lemma \ref{continuityset} in the appendix is equivalent to
\begin{equation} \label{continuitycondition}
P(I_{\overline{B}(x_j,r_j)} \le r_j+\alpha_j )= P(I_{(\overline{B}(x_j,r_j))^0} < r_j+\alpha_j).
\end{equation}
As to sufficiency we will see that it is enough to show that
\begin{equation} \label{Pphiweak}
P_\alpha \circ \phi^{-1} \rightarrow_w P \circ \phi^{-1} \quad \text{on } (\mathcal{F}(E \times \mathbb{R}),\tau_F(E \times \mathbb{R})).
\end{equation}
So, we are dealing with $(E \times \mathbb{R}, d \times u)$ as carrier space instead of $(E,d)$. (The product-metric is specified in the Appendix below.) Assume that (2) holds. Observe that
$$
 \mathcal{U}(D) = \{\bigcup_{i=1}^m \overline{B}_{d \times u}((x_i,\alpha_i),r_i): m \in \mathbb{N}, (x_i,\alpha_i) \in E_0 \times \mathbb{Q}, r_i \in D, i=1,\ldots,m\}.
$$
Let $U=\bigcup_{i=1}^m B_i \in \mathcal{U}(D)$, i.e. each $B_i$ is equal to the closed ball $\overline{B}_{d \times u}((x_i,\alpha_i),r_i), i=1,\ldots,m.$ Put $Q_\alpha := P_\alpha \circ \phi^{-1}$ and $Q := P \circ \phi^{-1}$. The \emph{inclusion-exclusion formula} yields:
\begin{equation} \label{ief}
Q_\alpha(\mathcal{H}(U))=Q_\alpha(\cup_{i=1}^m(\mathcal{H}(B_i)))=\sum_{k=1}^m (-1)^{k+1} \sum_{1 \le i_1<\ldots<i_k\le m} Q_\alpha(\cap_{s=1}^k \mathcal{H}(B_{i_s})).
\end{equation}
For each summand on the right side of equation (\ref{ief}) it follows that:
\begin{eqnarray*}
Q_\alpha(\cap_{s=1}^k \mathcal{H}(B_{i_s}))&=&P_\alpha(\cap_{s=1}^k \{f \in S: \text{epi}(f) \in \mathcal{H}(B_{i_s})\})\\
&=&P_\alpha(\cap_{s=1}^k \{f \in S: \text{epi}(f) \cap B_{i_s} \neq \emptyset\})\\
&=&P_\alpha(\cap_{s=1}^k \{I_{\overline{B}_d(x_{i_s},r_{i_s})} \le r_{i_s}+\alpha_{i_s}\}) \quad \text{by } (\ref{closedballisclosedrectangle}) \text{ and } (\ref{epi1}).
\end{eqnarray*}
Consequently by (2), $$Q_\alpha(\cap_{s=1}^k \mathcal{H}(B_{i_s})) \rightarrow P(\cap_{s=1}^k \{I_{\overline{B}_d(x_{i_s},r_{i_s})} \le r_{i_s}+\alpha_{i_s}\})=Q(\cap_{s=1}^k \mathcal{H}(B_{i_s}))$$
upon noticing (\ref{closedballisclosedrectangle}) and (\ref{epi1}) again.
With (\ref{ief}) we obtain that $Q_\alpha(\mathcal{H}(U)) \rightarrow Q(\mathcal{H}(U))$ for all $U \in \mathcal{U}(D).$ Thus the weak convergence (\ref{Pphiweak}) follows from Theorem \ref{UD}. Deduce from (\ref{Pphiweak}) that
$$
 P_\alpha \circ \phi^{-1} \rightarrow_w P \circ \phi^{-1} \quad \text{on the subspace } (\mathcal{E},\sigma).
$$
This yields $P_\alpha \rightarrow_w P$ by the Continuous Mapping Theorem, because
$$P_\alpha=(P_\alpha \circ \phi^{-1})\circ (\phi^{-1})^{-1}$$
and $\phi^{-1}:(\mathcal{E},\sigma) \rightarrow (S,\tau_e)$
is continuous.

Finally, assume that (3) holds. Then
\begin{eqnarray*}
Q_\alpha(\mathcal{M}(U))&=& Q_\alpha(\mathcal{M}(\cup_{i=1}^m B_i))= Q_\alpha(\cap_{i=1}^m \mathcal{M}(B_i))\\
                        &=&P_\alpha(\cap_{i=1}^m \{f \in S: \text{epi}(f) \in \mathcal{M}(B_i)\})\\
                        &=&P_\alpha(\cap_{i=1}^m \{f \in S: \text{epi}(f) \cap B_i = \emptyset\})\\
                        &=&P_\alpha(\cap_{i=1}^m \{I_{\overline{B}_d(x_i,r_i)} > r_i+\alpha_i\}) \quad \text{by } (\ref{closedballisclosedrectangle}) \text{ and } (\ref{epi1}).
\end{eqnarray*}
So, weak convergence (\ref{Pphiweak}) follows from Theorem \ref{UD} by complementation, which as shown above results in (1).
\end{proof}

The reformulation of our results in terms of normal integrands is obvious. For instance, a net $(Z_\alpha)$ of normal integrands epi-converges in distribution to a normal integrand $Z$ if and only if
$$
 \mathbb{P}_\alpha(\inf_{t \in \overline{B}_d(x_i,r_i)}Z_\alpha(t)>r_i+\alpha_i, i=1,\ldots,m) \rightarrow \mathbb{P}(\inf_{t \in \overline{B}_d(x_i,r_i)}Z(t)>r_i+\alpha_i, i=1,\ldots,m)
$$
for all $m \in \mathbb{N}, x_1,\ldots,x_m \in E_0, r_1,\ldots,r_m \in D, \alpha_1,\ldots,\alpha_m \in \mathbb{Q}.$ In this form we can immediately compare it with the equivalent characterisation of Molchanov's \cite{Molchanov} Proposition 5.3.20:
$$
 \mathbb{P}_\alpha(\inf_{t \in K_i}Z_\alpha(t)> t_i, i=1,\ldots,m) \rightarrow \mathbb{P}(\inf_{t \in K_i}Z(t)> t_i, i=1,\ldots,m)
$$
for all $m \in \mathbb{N}, t_1,\ldots,t_m \in \mathbb{R}$ and $K_1,\ldots,K_m$ belonging to a separating class of subsets of $E$ satisfying the condition
$$
P(I_{K_i} \le t_i )= P(I_{K_i^0} < t_i).
$$
For instance the family $\mathcal{K}$ is separating or the family of all finite unions of closed balls with center in $E_0$ and positive rational
radii. In both cases our countable class $\{\overline{B}_d(x,r): x \in E_0, r \in D\}$ is significantly smaller. Similarly, the countable set $\{r+\alpha: r \in D, \alpha \in \mathbb{Q}\}$ is a subset of the real line $\mathbb{R}$. Finally, Molchanov only considers sequences $(Z_n)_{n \in \mathbb{N}}$ and not more generally nets $(Z_\alpha)_{\alpha \in A}$, as we do. In the case of sequences and $E=\mathbb{R}^d$ there are further characterisations for epi-convergence in distribution. In their Theorem 3.14 Salinetti and Wets \cite{SalWe} show that if $(Z_n)$ is almost surely equi-lower semicontinuous, then epi-convergence in distribution is equivalent to convergence of the finite-dimensional distributions (fidis). Gersch \cite{Gersch}, Theorems 2.19 and 2.25, requires merely \emph{stochastically} equi-lower semicontinuity and gives a sufficient condition, which again involve the marginals of the $Z_n$, but in a more complicated way in comparison to the convergence of the fidis. However, if in addition the limit process $Z$ is stochastically uniformly lower semicontinuous, then epi-convergence in distribution and convergence of the fidis are equivalent. This equivalence also holds for convex $Z$ and $Z_n, n \in \mathbb{N}$ as Ferger \cite{Ferger1} shows in Theorem 2.12 and Proposition 2.13. In statistical applications the Skorokhod-space $(D(\mathbb{R}^d),s)$ plays an important role, because many empirical processes have trajectories lying in that function space. From Proposition 2.1 of Ferger \cite{Ferger4} it follows that, if $Z_n \stackrel{\mathcal{D}}{\rightarrow} Z$ in $(D(\mathbb{R}^d),s)$, then $\overline{Z}_n \stackrel{\mathcal{D}}{\rightarrow} \overline{Z}$ in $(S(\mathbb{R}^d),e)$. Here, $s$ denotes the Skorokhod-metric
and $\overline{f}$ is the lsc regularization of a function $f \in D(\mathbb{R}^d)$.\\

\begin{remark} \label{epiindistribution} Since $\phi: (S,\tau_e) \rightarrow (\mathcal{E},\sigma)$ is a homeomorphism, the Continuous Mapping Theorem yields that a net $(Z_\alpha)$ of normal integrands epi-converges in distribution to a normal
integrand $Z$, $Z_\alpha \stackrel{\mathcal{D}}{\rightarrow} Z$ in $(S,\tau_e)$, if and only if the pertaining epi$(Z_\alpha)$ converge in distribution to epi$(Z)$ in the space $E \times \mathbb{R}$, i.e.
$\text{epi}(Z_\alpha) \stackrel{\mathcal{D}}{\rightarrow} \text{epi}(Z)$ in $(\mathcal{F}(E \times \mathbb{R}), \tau_F(E \times \mathbb{R}))$.
In the literature so far this was taken as the definition of epi-convergence in distribution.
\end{remark}

\vspace{0.2cm}
\begin{remark} \label{interiorofclosedball} An important point in our investigations is that Vaughan's metric ensures that every closed bounded set is
compact. This is fulfilled if $E$ is a finite-dimensional normed linear space.
Moreover, notice that in normed linear spaces $(\overline{B}(x,r))^0=B(x,r)$. This need not be true more generally in metric spaces even if they are lcscH. As an example consider
a countable set $E$ with at least two elements endowed with the discrete metric. This space is lcscH, but $(\overline{B}(x,1))^0= E^0 =E \nsubseteq \{x\}= B(x,1).$ If $E$ is finite,
then $(E,d)$ is even compact and thus in particularly $d$ has the property of Vaughan's metric as actually every subset is compact.
\end{remark}

\section{Applications to sets of $\epsilon$-optimal solutions of normal integrands}
For $f \in S(E)$ and $\epsilon \ge 0$ let
$$
 A(f,\epsilon):=\{t \in E: f(t)\le I_E(f)+\epsilon\}
$$
be the set of all \emph{$\epsilon$-optimal solutions of $f$}. This set is closed, because $A(f,\epsilon)= \{f \le \alpha\}$ with $\alpha:= I_E(f)+\epsilon \in [-\infty,\infty]$. Now, if $I_E(f) \in \mathbb{R}$, then $\alpha \in \mathbb{R}$ and $A(f,\epsilon)=\{f \le \alpha\} \in \mathcal{F}$ by lower semicontinuity of $f$. If $I_E(f)= \infty$, then $A(f,\epsilon)=\{f \le \infty\}=E \in \mathcal{F}.$
Finally, in case that $I_E(f)=-\infty$ then $A(f,\epsilon)=\{f= -\infty\}= \bigcap_{n \in \mathbb{N}} \{f \le -n\} \in \mathcal{F}$ again by lower semicontinuity of $f$ and since $\mathcal{F}$ is closed under intersection.
So, the assignment $(f,\epsilon) \mapsto A(f,\epsilon)$ defines a map
\begin{equation} \label{Amap}
A:S \times \mathbb{R}_+ \rightarrow \mathcal{F}.
\end{equation}

For $\epsilon=0$ one obtains $A(f,0)=\{t \in E: f(t)=I_E(f)\}= $Argmin$(f)$, the set of all minimizing points of the function $f$.

\par
A very nice property of epi-convergence is formulated in our next result.\\

\begin{proposition} \label{PKconvergence} Let  $(f_n)_{n \in \mathbb{N}}$ be a sequence of lsc
functions, which epi-converges to some $f \in S$, i.e. $f_n \rightarrow f$ in $(S,\tau_e)$. Moreover, assume $(\epsilon_n)_{n \in \mathbb{N}} \ge 0 $ is a sequence
of non-negative numbers such that $\limsup_{n \rightarrow \infty} \epsilon_n \le \epsilon$.
Then
\begin{equation} \label{PKconv}
 \text{PK-}\limsup_{n \rightarrow} A(f_n,\epsilon_n) \subseteq A(f,\epsilon)).
\end{equation}
Here, $\text{PK-}\limsup$ denotes the \emph{Painlev\'{e}-Kuratowski outer limit} of a sequence of sets.
\end{proposition}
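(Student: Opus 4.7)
My plan is to unwind the definition of the Painlev\'e--Kuratowski outer limit and apply two classical consequences of epi-convergence of lsc functions: the pointwise $\liminf$-inequality along converging sequences, and the $\limsup$-inequality for the infima. Both are standard reformulations of PK-convergence of the epigraphs through the homeomorphism (\ref{phi}).

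First I would take $t\in\text{PK-}\limsup_{n}A(f_n,\epsilon_n)$. By definition of the outer limit there exist a subsequence $(n_k)_{k\in\mathbb{N}}$ and points $t_k\in A(f_{n_k},\epsilon_{n_k})$ with $t_k\to t$ in $(E,d)$. The defining inequality of the $\epsilon$-optimal solution set reads
\[
f_{n_k}(t_k)\le I_E(f_{n_k})+\epsilon_{n_k}\qquad(k\in\mathbb{N}).
\]
Next, epi-convergence $f_n\to f$ in $(S,\tau_e)$ implies the pointwise lower bound $f(s)\le\liminf_n f_n(s_n)$ whenever $s_n\to s$ in $E$: indeed if $\liminf_n f_n(s_n)=c$, passing to a further subsequence along which $f_n(s_n)\to c$ gives $(s_n,f_n(s_n))\in\text{epi}(f_n)$ converging to $(s,c)$, and the PK-outer inclusion for epigraphs places $(s,c)$ in $\text{epi}(f)$. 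Applying this to $(t_k)$,
\[
f(t)\le\liminf_k f_{n_k}(t_k)\le\liminf_k\bigl[I_E(f_{n_k})+\epsilon_{n_k}\bigr]\le\limsup_k I_E(f_{n_k})+\limsup_k\epsilon_{n_k}.
\]

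The complementary recovery-sequence half of epi-convergence now bounds the first $\limsup$: for every $x\in E$ there exists $x_n\to x$ with $\limsup_n f_n(x_n)\le f(x)$, and since $I_E(f_n)\le f_n(x_n)$ this yields $\limsup_n I_E(f_n)\le f(x)$ for every $x$, hence $\limsup_n I_E(f_n)\le I_E(f)$; restriction to a subsequence preserves the bound. Together with the hypothesis $\limsup_n\epsilon_n\le\epsilon$, the right-hand side of the displayed chain is at most $I_E(f)+\epsilon$, so $f(t)\le I_E(f)+\epsilon$, i.e.\ $t\in A(f,\epsilon)$.

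The only place I expect mild care to be needed is extended-real arithmetic in $\overline{\mathbb{R}}$ when $I_E(f)\in\{\pm\infty\}$: if $I_E(f)=+\infty$ then $A(f,\epsilon)=E$ and the inclusion is trivial; if $I_E(f)=-\infty$ then $\limsup_k I_E(f_{n_k})\le-\infty$ forces $I_E(f_{n_k})\to-\infty$, and since $\epsilon_{n_k}$ stays bounded above the chain gives $f(t)=-\infty$, so $t\in\{f=-\infty\}=A(f,\epsilon)$. There is no serious obstacle; the whole proof rests on the two standard consequences of epi-convergence recalled above, and on the elementary $\limsup(a_k+b_k)\le\limsup a_k+\limsup b_k$ to handle the sum.
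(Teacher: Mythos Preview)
Your proof is correct and uses essentially the same ingredients as the paper's: the $\liminf$-inequality of epi-convergence applied to $t_k\to t$, and the recovery-sequence half to control the infima, combined with $\limsup_n\epsilon_n\le\epsilon$. The only cosmetic difference is that the paper argues by contradiction---assuming $t\notin A(f,\epsilon)$, it picks a single $s$ with $f(t)>f(s)+\epsilon$ and applies the recovery sequence at that one point---whereas you argue directly by first establishing $\limsup_n I_E(f_n)\le I_E(f)$ via recovery sequences at every $x$; the logical content is the same.
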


\begin{proof} Let $t \in \text{PK-}\limsup_{n \rightarrow} A(f_n,\epsilon_n)$. By definition of the outer limit there exists a subsequence $(n_j)_{j \in \mathbb{N}}$ of the natural numbers and points $t_{n_j} \in
A(f_{n_j},\epsilon_{n_j})$ with $t_{n_j} \rightarrow t$ as $j \rightarrow \infty.$ Assume that $t \notin A(f,\epsilon)$. Then there exists some $s \in E$ such that $f(t)>f(s)+\epsilon$, because otherwise
$f(t) \le f(s)+\epsilon$ for all $s \in E$, whence $I_E(f) \ge f(t)-\epsilon$ and thus $f(t) \le I_E(f)+\epsilon$ in contradiction to $t \notin A(f,\epsilon)$. It follows from the characteristion of epi-convergence, confer Theorem 5.3.2 (ii) in Molchanov \cite{Molchanov}, that
there exists some sequence $(s_n)$ such that $s_n \rightarrow s$ and $f_n(s_n) \rightarrow f(s)$. Since $t_{n_j} \in A(f_{n_j},\epsilon_{n_j})$, we have that
$f_{n_j}(t_{n_j}) \le I_E(f_{n_j})+\epsilon_{n_j} \le f_{n_j}(s_{n_j})+\epsilon_{n_j}$ and so
\begin{equation} \label{snj}
 f_{n_j}(s_{n_j}) \ge f_{n_j}(t_{n_j}) - \epsilon_{n_j} \quad \forall \; j \in \mathbb{N}.
\end{equation}
Conclude that
\begin{eqnarray*}
f(t) &>& f(s) + \epsilon \\
     &=& \lim_{n \rightarrow \infty} f_n(s_n) + \epsilon\\
     &=& \liminf_{j \rightarrow \infty} f_{n_j}(s_{n_j}) + \epsilon\\
     &\ge& \liminf_{j \rightarrow \infty} (f_{n_j}(t_{n_j})- \epsilon_{n_j}) + \epsilon \hspace{3cm} \text{by } (\ref{snj})\\
     &\ge& \liminf_{j \rightarrow \infty} f_{n_j}(t_{n_j}) + \liminf_{j \rightarrow \infty} (-\epsilon_{n_j}) + \epsilon\\
     &=&  \liminf_{j \rightarrow \infty} f_{n_j}(t_{n_j}) - \limsup_{j \rightarrow \infty} \epsilon_{n_j} + \epsilon\\
     &\ge& \liminf_{j \rightarrow \infty} f_{n_j}(t_{n_j}) - \limsup_{n \rightarrow \infty} \epsilon_n + \epsilon\\
     &\ge& \liminf_{j \rightarrow \infty} f_{n_j}(t_{n_j}) \hspace{4.7cm} \text{by assumption on } (\epsilon_n)\\
     &\ge& f(t),
\end{eqnarray*}
where the last inequality follows from Theorem 5.3.2 (ii) in Molchanov \cite{Molchanov} since $(f_{n_j})_{j \in \mathbb{N}}$ as a subsequence epi-converges to $f$ as well.
So, we arrive at a contradiction, which finishes our proof.
\end{proof}

Let $\mathcal{T}:=\{[0,r): 0< r \in \mathbb{R}\} \cup \{\emptyset,\mathbb{R}_+\}$ be the \emph{left-order topology} on $\mathbb{R}_+:=[0,\infty)$. Then the assumption $\limsup_{n \rightarrow \infty} \epsilon_n \le \epsilon$ is
equivalent to $\epsilon_n \rightarrow \epsilon$ in $(\mathbb{R}_+,\mathcal{T})$. If $\epsilon=0$, then $\epsilon_n \rightarrow 0$ in the natural topology $\mathcal{T}_n$ on $\mathbb{R}_+$ since all $\epsilon_n$ are non-negative and therefore
$\liminf_{n \rightarrow \infty} \epsilon_n \ge 0$.
For this special case we obtain Proposition 2.9 of Attouch \cite{Attouch}, who however considers more generally arbitrary topological spaces $(E,\mathcal{G})$.\\

Next, let $\tau_{uF}$ be the \emph{upper Fell topology}, which is generated by the family $\mathcal{S}_{uF}:=\{\mathcal{M}(K): K \in \mathcal{K}\}$. Since $\mathcal{S}_{uF} \subseteq \mathcal{S}$, the upper Fell-topology is weaker
than the Fell-topology. In Lemma 2.2, Vogel \cite{Vogel} shows in case $E=\mathbb{R}^d$ that a sequence $(F_n)_{n \in \mathbb{N}} \subseteq \mathcal{F}$ of closed sets in $E$ converges to some $F \in \mathcal{F}$ in the
upper Fell-topology, i.e.

\begin{equation} \label{PKMiss}
 F_n \rightarrow F \text{ in } (\mathcal{F},\tau_{uF}) \; \text{ if and only if } \; \text{PK-}\limsup_{n \rightarrow \infty} F_n \subseteq F.
\end{equation}

This remains valid more generally for $(E,\mathcal{G})$ lcscH, confer Proposition 2.18 (b) in Ferger \cite{Ferger0}. Our next result yields continuity of the map $$A:S \times \mathbb{R}_+ \rightarrow \mathcal{F}.$$\\

\begin{corollary} \label{Aiscontinuous} Let $\tau_e \times \mathcal{T}$ be the product-topology on $S \times \mathbb{R}_+$. Then
\begin{equation} \label{leftordertopology}
A:(S \times \mathbb{R}_+,\tau_e \times \mathcal{T}) \rightarrow (\mathcal{F},\tau_{uF}) \quad \text{is continuous}.
\end{equation}
In particularly,
\begin{equation} \label{naturaltopology}
A:(S \times \mathbb{R}_+,\tau_e \times \mathcal{T}_n) \rightarrow (\mathcal{F},\tau_{uF}) \quad \text{is continuous}.
\end{equation}
\end{corollary}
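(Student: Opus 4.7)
The plan is to reduce continuity to sequential continuity and then invoke Proposition \ref{PKconvergence} together with the characterisation (\ref{PKMiss}) of upper Fell convergence via Painlev\'e--Kuratowski outer limits.

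First I would check that the domain is first-countable, so that sequential continuity is enough. The space $(S,\tau_e)$ is homeomorphic via $\phi$ to a subspace of $(\mathcal{F}(E\times\mathbb{R}),\tau_F(E\times\mathbb{R}))$, which is metrisable since $E\times\mathbb{R}$ is lcscH; hence $(S,\tau_e)$ is metrisable. The space $(\mathbb{R}_+,\mathcal{T})$ is first-countable: at any $\epsilon\ge 0$, the sets $[0,\epsilon+1/n)$, $n\in\mathbb{N}$, form a countable neighbourhood base. Therefore the product $(S\times\mathbb{R}_+,\tau_e\times\mathcal{T})$ is first-countable, and so $A$ is $\tau_{uF}$-continuous if and only if it is sequentially continuous.

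Next, I would verify sequential continuity directly. Fix $(f,\epsilon)\in S\times\mathbb{R}_+$ and take a sequence $(f_n,\epsilon_n)$ converging to $(f,\epsilon)$ in $\tau_e\times\mathcal{T}$. Convergence in the left-order topology $\mathcal{T}$ is equivalent to $\limsup_{n\to\infty}\epsilon_n\le \epsilon$, because every $\mathcal{T}$-open neighbourhood of $\epsilon$ contains some $[0,\epsilon+\delta)$, and eventual membership in such a set is exactly the $\limsup$ condition. Together with $f_n\to f$ in $\tau_e$, Proposition \ref{PKconvergence} applies and yields
\[
 \text{PK-}\limsup_{n\to\infty} A(f_n,\epsilon_n)\;\subseteq\; A(f,\epsilon).
\]
By the characterisation (\ref{PKMiss}) of upper Fell convergence in lcscH spaces, this is precisely $A(f_n,\epsilon_n)\to A(f,\epsilon)$ in $(\mathcal{F},\tau_{uF})$, proving (\ref{leftordertopology}).

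For (\ref{naturaltopology}), I would simply observe that every set $[0,r)$ is open in the natural topology $\mathcal{T}_n$, so $\mathcal{T}\subseteq\mathcal{T}_n$ and the identity map $(\mathbb{R}_+,\mathcal{T}_n)\to (\mathbb{R}_+,\mathcal{T})$ is continuous. Hence the identity on $S\times\mathbb{R}_+$ is continuous from $\tau_e\times\mathcal{T}_n$ to $\tau_e\times\mathcal{T}$, and composing with (\ref{leftordertopology}) gives the claim. The only conceptual subtlety, which I would flag as the main obstacle, is that $(\mathcal{F},\tau_{uF})$ is not Hausdorff or metrisable, so one might fear that sequential continuity is insufficient; this is circumvented because first-countability is a property of the domain, not the target.
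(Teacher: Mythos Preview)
Your proof is correct and follows essentially the same route as the paper: reduce to sequential continuity via first-countability of the domain, then apply Proposition \ref{PKconvergence} together with (\ref{PKMiss}), and deduce (\ref{naturaltopology}) from $\mathcal{T}\subseteq\mathcal{T}_n$. The only cosmetic difference is that the paper obtains first-countability of $(S,\tau_e)$ by citing second-countability from Attouch, whereas you use metrisability via the homeomorphism $\phi$ into the metrisable space $\mathcal{F}(E\times\mathbb{R})$; both are valid and your remark that only first-countability of the domain (not the target) is needed is exactly the point.
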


\begin{proof} By Corollary 2.79 of Attouch \cite{Attouch} the space $(S,\tau_e)$ is second-countable, which also holds for $(\mathbb{R}_+,\mathcal{T})$ as one easily verifies. Thus the product
$(S \times \mathbb{R}_+,\tau_e \times \mathcal{T})$ is second-countable as well and in particularly, it is first-countable. By Theorem 7.1.3 in Singh \cite{Singh} it suffices to show that
$A$ is sequentially-continuous. So, assume that $(f_n,\epsilon_n) \rightarrow (f,\epsilon)$ in $(S \times \mathbb{R}_+,\tau_e \times \mathcal{T})$. This is equivalent to
$f_n \rightarrow f$ in $(S,\tau_e)$ and $\epsilon_n \rightarrow \epsilon$ in $(\mathbb{R}_+,\mathcal{T})$. But the latter in turn means that $\limsup_{n \rightarrow \infty} \epsilon_n \le \epsilon$.
Therefore, the assertion in (\ref{leftordertopology}) follows from Proposition \ref{PKconvergence} in combination with the equivalence (\ref{PKMiss}). Since $\tau_e \times \mathcal{T}_n \supseteq \tau_e \times \mathcal{T}$,
the second assertion follows from the first one.
\end{proof}

Let $Z$ be a normal integrand and $\epsilon$ a random variable with values in $\mathbb{R}_+$ both defined on some measurable space $(\Omega,\mathcal{A})$. A first useful application of Corollary \ref{Aiscontinuous} yields
measurability of the random set $A(Z,\epsilon)$. In the proof below we will use the following notation: Given a topological space $(X,\mathcal{O})$ the pertaining Borel-$\sigma$ algebra $\sigma(\mathcal{O})$ is denoted by $\mathcal{B}(X)$.\\

\begin{corollary} \label{Aisrandomclosedset} If $Z$ and $\epsilon$ are as above, then $A(Z,\epsilon)$ is a random closed set.
\end{corollary}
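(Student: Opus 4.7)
The plan is to write $A(Z,\epsilon) = A\circ(Z,\epsilon)$ and combine measurability of the pair $(Z,\epsilon):\Omega\to S\times\mathbb{R}_+$ with the continuity of $A$ furnished by Corollary~\ref{Aiscontinuous}. The only delicate point is that Corollary~\ref{Aiscontinuous} delivers continuity into $(\mathcal{F},\tau_{uF})$ rather than $(\mathcal{F},\tau_F)$, so at the end one has to check that the Borel $\sigma$-algebra generated by the upper Fell topology already coincides with the Fell Borel $\sigma$-algebra $\underline{\mathcal{B}}_F$.

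First I would establish measurability of the pair. Since $Z$ is $\mathcal{A}$-$\mathcal{B}_e$ measurable and $\epsilon$ is $\mathcal{A}$-$\mathcal{B}(\mathbb{R}_+)$ measurable, $(Z,\epsilon)$ is measurable into the product $\sigma$-algebra $\mathcal{B}_e\otimes\mathcal{B}(\mathbb{R}_+)$. Both $(S,\tau_e)$ and $(\mathbb{R}_+,\mathcal{T}_n)$ are second countable (for $(S,\tau_e)$ this is Corollary~2.79 of Attouch~\cite{Attouch}, already invoked in the proof of Corollary~\ref{Aiscontinuous}), and for second countable spaces the Borel $\sigma$-algebra of the product equals the product of the Borel $\sigma$-algebras. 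Hence $(Z,\epsilon)$ is $\mathcal{A}$-$\mathcal{B}(S\times\mathbb{R}_+)$ measurable.

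Next, assertion~\eqref{naturaltopology} of Corollary~\ref{Aiscontinuous} says that $A$ is continuous from $(S\times\mathbb{R}_+,\tau_e\times\mathcal{T}_n)$ into $(\mathcal{F},\tau_{uF})$, and is therefore $\mathcal{B}(S\times\mathbb{R}_+)$-$\sigma(\tau_{uF})$ measurable. Composing with the pair map yields that $A(Z,\epsilon):\Omega\to\mathcal{F}$ is $\mathcal{A}$-$\sigma(\tau_{uF})$ measurable.

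The hard (and only non-routine) step is the identification $\sigma(\tau_{uF})=\underline{\mathcal{B}}_F$. The inclusion $\sigma(\tau_{uF})\subseteq\underline{\mathcal{B}}_F$ is immediate from $\tau_{uF}\subseteq\tau_F$. For the reverse inclusion I would argue that $\mathcal{H}(K)=\mathcal{M}(K)^c\in\sigma(\tau_{uF})$ for every $K\in\mathcal{K}$, and that, since $E$ is lcscH, every open $G\in\mathcal{G}$ is an increasing countable union of compact sets $K_n$, so $\mathcal{H}(G)=\bigcup_n\mathcal{H}(K_n)\in\sigma(\tau_{uF})$. Thus $\sigma(\tau_{uF})$ contains the entire generating subbase $\mathcal{S}$ of $\tau_F$, and hence $\underline{\mathcal{B}}_F\subseteq\sigma(\tau_{uF})$. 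With this identification of $\sigma$-algebras, the composition $A(Z,\epsilon)$ is $\mathcal{A}$-$\underline{\mathcal{B}}_F$ measurable, i.e.\ a random closed set.
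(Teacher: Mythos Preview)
Your proof is correct and follows essentially the same route as the paper: continuity of $A$ from \eqref{naturaltopology}, the product identity $\mathcal{B}(S\times\mathbb{R}_+)=\mathcal{B}_e\otimes\mathcal{B}(\mathbb{R}_+)$ via second countability, and composition with the measurable pair $(Z,\epsilon)$. The only difference is that the paper dispatches the identification $\sigma(\tau_{uF})=\underline{\mathcal{B}}_F$ by citing Lemma~2.1.1 of Schneider and Weil \cite{Weil} (which gives $\underline{\mathcal{B}}_F=\sigma(\mathcal{S}_{uF})$ directly), whereas you supply the elementary argument via $\sigma$-compactness of open sets in an lcscH space; your last implication ``$\mathcal{S}\subseteq\sigma(\tau_{uF})\Rightarrow\underline{\mathcal{B}}_F\subseteq\sigma(\tau_{uF})$'' tacitly uses second countability of $(\mathcal{F},\tau_F)$ (so that every $\tau_F$-open set is a \emph{countable} union of finite intersections of subbase elements), which is available here and worth stating explicitly.
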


\begin{proof} It follows from (\ref{naturaltopology}) that $A$ is $\mathcal{B}(S \times \mathbb{R}_+)-\underline{\mathcal{B}}_{uF}$ measurable, where $\underline{\mathcal{B}}_{uF} := \sigma(\tau_{uF})$. But $\underline{\mathcal{B}}_{uF}=\underline{\mathcal{B}}_{F}$,
because $\underline{\mathcal{B}}_{F} = \sigma(\mathcal{S}_{uF})$ by Lemma 2.1.1 in Schneider and Weil \cite{Weil} and $\mathcal{S}_{uF} \subseteq \tau_{uF} \subseteq \tau_F.$
Since $(S \times \mathbb{R}_+,\tau_e \times \mathcal{T}_n)$ is second-countable, it follows that the Borel-$\sigma$ algebra $\mathcal{B}(S \times \mathbb{R}_+) = \mathcal{B}(S) \otimes \mathbb{B}(\mathbb{R}_+)$.
Infer that $A$ is $\mathcal{B}(S) \otimes \mathbb{B}(\mathbb{R}_+)-\underline{\mathcal{B}}_{F}$ measurable. Deduce from our assumption that the product map $(Z,\epsilon):(\Omega,\mathcal{A}) \rightarrow (S \times \mathbb{R}_+,\mathcal{B}(S) \otimes \mathbb{B}(\mathbb{R}_+))$ is measurable, whence the assertion follows upon noticing that $A(Z,\epsilon)=A \circ (Z,\epsilon)$ is a composition of measurable maps.
\end{proof}

A further utility of Corollary \ref{Aiscontinuous} is that it enables us to apply the Continuous Mapping Theorem (CMT) for random variables in topological spaces, confer Proposition 8.4.16 in G\"{a}nssler and Stute \cite{Stute}.
The random variables $Z, \epsilon$ and $Z_\alpha, \epsilon_\alpha$ occuring in our results below are defined on $(\Omega,\mathcal{A},\mathbb{P})$ and $(\Omega_\alpha,\mathcal{A}_\alpha,\mathbb{P}_\alpha)$, respectively.\\

\begin{theorem} \label{epidistA} Let $(Z_\alpha)$ and $(\epsilon_\alpha)$ be nets of normal integrands and non-negative random variables, respectively.
Assume that
\begin{equation} \label{Zeps}
 (Z_\alpha,\epsilon_\alpha) \stackrel{\mathcal{D}}{\rightarrow} (Z,\epsilon) \; \text{ in } (S \times \mathbb{R}_+,\tau_e \times \mathcal{T}).
\end{equation}
Then $A(Z_\alpha,\epsilon_\alpha) \stackrel{\mathcal{D}}{\rightarrow} A(Z,\epsilon)$ in $(\mathcal{F}, \tau_{uF}).$
This is the same as
\begin{eqnarray} \label{K}
& &\limsup_\alpha \mathbb{P}_\alpha(\bigcap_{K \in \mathcal{K}^*} \{\omega \in \Omega_\alpha: A(Z_\alpha(\omega),\epsilon_\alpha(\omega)) \cap K \neq \emptyset\} ) \nonumber\\
 &\le& \mathbb{P}(\bigcap_{K \in \mathcal{K}^*} \{\omega \in \Omega: A(Z(\omega),\epsilon(\omega)) \cap K \neq \emptyset\} )
\end{eqnarray}
for every collection $\mathcal{K}^* \subseteq \mathcal{K}$ of compact sets in $E$.
\end{theorem}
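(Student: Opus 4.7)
The plan is to apply the Continuous Mapping Theorem (CMT) to obtain the first assertion and then to unfold what convergence in distribution on $(\mathcal{F},\tau_{uF})$ means by a direct Portmanteau-type characterization of the $\tau_{uF}$-closed sets.

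By Corollary \ref{Aiscontinuous} the map $A:(S\times\mathbb{R}_+,\tau_e\times\mathcal{T})\rightarrow(\mathcal{F},\tau_{uF})$ is continuous. The hypothesis $(\ref{Zeps})$ asserts that $(Z_\alpha,\epsilon_\alpha)\stackrel{\mathcal{D}}{\rightarrow}(Z,\epsilon)$ in exactly the product space on the left. The CMT for random elements in topological spaces (Proposition 8.4.16 in G\"anssler and Stute) therefore yields immediately
$$
A(Z_\alpha,\epsilon_\alpha)=A\circ(Z_\alpha,\epsilon_\alpha)\stackrel{\mathcal{D}}{\longrightarrow}A\circ(Z,\epsilon)=A(Z,\epsilon)\quad\text{in }(\mathcal{F},\tau_{uF}),
$$
which is the first claim of the theorem.

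For the equivalent reformulation $(\ref{K})$ the next step is to describe the $\tau_{uF}$-closed sets. The generating family $\mathcal{S}_{uF}=\{\mathcal{M}(K):K\in\mathcal{K}\}$ is already stable under finite intersections, since $\mathcal{M}(K_1)\cap\cdots\cap\mathcal{M}(K_m)=\mathcal{M}(K_1\cup\cdots\cup K_m)$ and $\mathcal{K}$ is stable under finite unions; hence $\mathcal{S}_{uF}$ is in fact a base for $\tau_{uF}$. Consequently every $\tau_{uF}$-open set has the form $\bigcup_{K\in\mathcal{K}^*}\mathcal{M}(K)$ for some $\mathcal{K}^*\subseteq\mathcal{K}$, and by complementation every $\tau_{uF}$-closed set has the form $\bigcap_{K\in\mathcal{K}^*}\mathcal{H}(K)$.

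Writing $R_\alpha$ and $R$ for the distributions of $A(Z_\alpha,\epsilon_\alpha)$ and $A(Z,\epsilon)$, the paper's definition of weak convergence, namely $\liminf_\alpha R_\alpha(\mathbf{O})\ge R(\mathbf{O})$ for every $\tau_{uF}$-open $\mathbf{O}$, translates by complementation (legitimate since all measures are probability measures) into the Portmanteau condition $\limsup_\alpha R_\alpha(\mathbf{C})\le R(\mathbf{C})$ for every $\tau_{uF}$-closed $\mathbf{C}$. Substituting $\mathbf{C}=\bigcap_{K\in\mathcal{K}^*}\mathcal{H}(K)$ and rewriting each hit event as $\{\omega:A(Z_\alpha(\omega),\epsilon_\alpha(\omega))\cap K\neq\emptyset\}$ reproduces precisely $(\ref{K})$. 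The only point requiring a moment of care is that $(\mathcal{F},\tau_{uF})$ is not $T_1$, but because the paper's notion of weak convergence is defined directly through the open-set inequality, no additional regularity is needed and the entire argument reduces to the CMT applied to the already established continuity of $A$.
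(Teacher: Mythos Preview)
Your proof is correct and follows the same line as the paper's. The first assertion is obtained exactly as in the paper, by applying the CMT to the continuity of $A$ established in Corollary~\ref{Aiscontinuous}. For the equivalence with (\ref{K}) the paper simply cites Proposition~2.1 of Ferger~\cite{Ferger2}, whereas you supply the content of that proposition directly: the observation that $\mathcal{S}_{uF}$ is $\cap$-stable and hence a base, so that the $\tau_{uF}$-closed sets are precisely the intersections $\bigcap_{K\in\mathcal{K}^*}\mathcal{H}(K)$, combined with the open/closed Portmanteau duality. This makes your argument self-contained where the paper defers to an external reference, but the underlying idea is the same.
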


\begin{proof} Assumption (\ref{Zeps}) and the CMT applied to $A$ in Corollary \ref{Aiscontinuous} gives the first assertion of the theorem. The second one follows from Proposition 2.1 of
Ferger \cite{Ferger2}.
\end{proof}

A legitimate question is what are sufficient conditions for the validity of (\ref{Zeps})? We provide a few answers in:\\

\begin{remark} \label{suffcond} (1) Assume that there is a component-wise convergence, i.e.
\begin{equation} \label{componentwise}
Z_\alpha \stackrel{\mathcal{D}}{\rightarrow} Z \text{ in }  (S,\tau_e) \text{ and }
\epsilon_\alpha \stackrel{\mathcal{D}}{\rightarrow} \epsilon \text{ in } (\mathbb{R}_+,\mathcal{T}_n).
\end{equation}
If $\epsilon$ is almost surely constant, then
\begin{equation} \label{Zepsnatural}
 (Z_\alpha,\epsilon_\alpha) \stackrel{\mathcal{D}}{\rightarrow} (Z,\epsilon) \; \text{ in } (S \times \mathbb{R}_+,\tau_e \times \mathcal{T}_n).
\end{equation}
This follows from Slutsky's Theorem, confer Proposition 8.6.4 in G\"{a}nssler and Stute \cite{Stute}. Now, (\ref{Zepsnatural}) implies (\ref{Zeps}), because $\tau_e \times \mathcal{T}_n \supseteq \tau_e \times \mathcal{T}$.\\

(2) Suppose  that $Z_\alpha$ and $\epsilon_\alpha$ are $\mathbb{P}_\alpha$-independent for each $\alpha$. If component-wise convergence (\ref{componentwise}) holds, where $Z$ and $\epsilon$ are $\mathbb{P}$-independent,
then again (\ref{Zepsnatural}) holds by Theorem 2.8 in Billingsley \cite{Bill}. As we know this is enough for (\ref{Zeps}). A special case for this is when the net $(\epsilon_\alpha)$ is deterministic and convergent with (deterministic) limit $\epsilon$.\\

(3) It should be mentioned that Slutsky's Theorem and Theorem 2.8 in Billingsley \cite{Bill} are formulated only for sequences. However, by using Proposition 8.4.9 in G\"{a}nssler and Stute \cite{Stute} one can see that their proofs can easily be transferred to nets .
\end{remark}

\vspace{0.5cm}
The question arises as to the requirements under which Fell-convergence in distribution is obtained.
The answer involves the family $\textbf{F}_{0,1}:=\{\emptyset\} \cup \{\{x\}: x \in E\}$ of sets with at most one element.\\

\begin{theorem} \label{Fellconvergencein distribution} Assume that (\ref{Zeps}) holds with $\epsilon=0$.
If for every $\eta>0$ there exists a compact $K \subseteq E$ such that
$$
 \liminf_\alpha \mathbb{P}_\alpha(\{\omega \in \Omega_\alpha: \emptyset \neq A(Z_\alpha(\omega),\epsilon_\alpha(\omega)) \subseteq K\}) \ge 1-\eta
$$
and if $\mathbb{P}(\{\omega \in \Omega: Z(\omega) \in \textbf{F}_{0,1}\})=1$, i.e. $Z$ has at most one minimizing point almost surely, then $$A(Z_\alpha,\epsilon_\alpha) \stackrel{\mathcal{D}}{\rightarrow} \text{Argmin}(Z) \text{ in } (\mathcal{F}, \tau_{F}).$$
\end{theorem}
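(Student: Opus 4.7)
The plan is to combine Theorem~\ref{epidistA} (which gives convergence only in the upper Fell topology $\tau_{uF}$) with the two extra hypotheses, tightness and essential uniqueness of the minimizer, to upgrade the convergence to the full Fell topology $\tau_F$. First I would note that assumption (\ref{Zeps}) with $\epsilon = 0$ and Theorem~\ref{epidistA} already deliver
$$
C_\alpha := A(Z_\alpha, \epsilon_\alpha) \stackrel{\mathcal{D}}{\longrightarrow} A(Z,0) = \mathrm{Argmin}(Z) =: C \quad \text{in } (\mathcal{F}, \tau_{uF}).
$$
Applying the portmanteau theorem to the $\tau_{uF}$-open sets $\mathcal{M}(K)$, $K \in \mathcal{K}$, this is equivalent to the "upper half" of the Fell portmanteau, namely $\limsup_\alpha T_{C_\alpha}(K) \le T_C(K)$ for every $K \in \mathcal{K}$, where $T_C(K) := \mathbb{P}(C \cap K \neq \emptyset)$ is the capacity functional. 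Recalling Molchanov's Theorem 1.7.7, which characterises Fell weak convergence by $T_{C_\alpha}(K) \to T_C(K)$ on the $C$-continuity class $\mathcal{K}_Q$, it therefore suffices to produce the matching lower bound
$$
\liminf_\alpha \mathbb{P}_\alpha(C_\alpha \cap G \neq \emptyset) \; \ge \; \mathbb{P}(C \cap G \neq \emptyset) \quad \text{for every } G \in \mathcal{G}. \qquad (\ast)
$$

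To prove $(\ast)$, I would fix $\eta > 0$ and, by the tightness hypothesis, select a compact $K_0 \subseteq E$ with $\liminf_\alpha \mathbb{P}_\alpha(\emptyset \neq C_\alpha \subseteq K_0) \ge 1 - \eta$. The decisive set inclusion is
$$
\{C_\alpha \cap G = \emptyset\} \cap \{\emptyset \neq C_\alpha \subseteq K_0\} \subseteq \{C_\alpha \cap (K_0 \setminus G) \neq \emptyset\},
$$
since on the left-hand event $C_\alpha$ is a non-empty subset of the compact set $K_0 \cap G^c = K_0 \setminus G$. Applying the upper-Fell bound to the compact set $K_0 \setminus G$ yields
$$
\limsup_\alpha \mathbb{P}_\alpha(C_\alpha \cap G = \emptyset) \; \le \; \mathbb{P}\bigl(C \cap (K_0 \setminus G) \neq \emptyset\bigr) + \eta \; \le \; \mathbb{P}(C \cap G^c \neq \emptyset) + \eta.
$$
At this point the uniqueness hypothesis enters decisively: since $C \in \textbf{F}_{0,1}$ almost surely, $\{C \cap G^c \neq \emptyset\} = \{C \not\subseteq G\} \subseteq \{C \cap G = \emptyset\}$ (the event $\{C = \emptyset\}$ lies in the right-hand set but not the left, since $\emptyset \subseteq G$ vacuously). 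Hence $\mathbb{P}(C \cap G^c \neq \emptyset) \le \mathbb{P}(C \cap G = \emptyset)$, and letting $\eta \downarrow 0$ gives $(\ast)$.

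Combining the upper bound with $(\ast)$ applied to $G = K^0$ produces $T_C(K^0) \le \liminf_\alpha T_{C_\alpha}(K) \le \limsup_\alpha T_{C_\alpha}(K) \le T_C(K)$, and for every $K \in \mathcal{K}_Q$ the equality $T_C(K^0) = T_C(K)$ collapses this chain to $T_{C_\alpha}(K) \to T_C(K)$. Molchanov's Theorem 1.7.7 then yields the asserted Fell convergence. The main obstacle I anticipate is step $(\ast)$: upper-Fell convergence alone is too weak, because $\mathcal{H}(G)$ is \emph{not} $\tau_{uF}$-open when $G$ fails to be relatively compact, so the tightness hypothesis is indispensable for cutting $G$ down to the compact piece $K_0 \setminus G$, while the uniqueness hypothesis is precisely what makes the compensating inequality $\mathbb{P}(C \cap G^c \neq \emptyset) \le \mathbb{P}(C \cap G = \emptyset)$ available; without either, the argument collapses.
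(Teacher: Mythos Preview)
Your argument is correct. Both you and the paper begin identically, invoking Theorem~\ref{epidistA} to obtain $A(Z_\alpha,\epsilon_\alpha)\stackrel{\mathcal D}{\rightarrow}\mathrm{Argmin}(Z)$ in $(\mathcal F,\tau_{uF})$. The paper then finishes in one line by quoting Theorem~2.9 of Ferger~\cite{Ferger2}, a ready-made black box that upgrades upper-Fell convergence to Fell convergence under precisely the tightness condition $\liminf_\alpha\mathbb P_\alpha(\emptyset\neq C_\alpha\subseteq K)\ge 1-\eta$ together with $\mathbb P(C\in\mathbf F_{0,1})=1$. You instead unpack this upgrade by hand: the tightness hypothesis lets you replace the open $G$ by the compact $K_0\setminus G$, and the singleton hypothesis converts $\mathbb P(C\cap G^c\neq\emptyset)$ into $\mathbb P(C\cap G=\emptyset)$, yielding the liminf inequality $(\ast)$; the sandwich on $\mathcal K_Q$ then closes via Molchanov's Theorem~1.7.7 (or, staying entirely within the paper, via Theorem~\ref{UD} since $\mathcal U(D)\subseteq\mathcal K_Q$). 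Your route is more self-contained and makes transparent exactly where each hypothesis is consumed, at the cost of reproducing an argument the author has already published elsewhere; the paper's route is terser but depends on an external reference whose proof is essentially what you wrote.
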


\begin{proof} First, notice that by Lemma 4.6 of Ferger \cite{Ferger2} the set $\textbf{F}_{0,1}$ is closed in $(\mathcal{F},\tau_F)$ and consequently $\{Z \in \textbf{F}_{0,1}\} \in \mathcal{A}$, the domain of $\mathbb{P}.$ It follows from Theorem \ref{epidistA} that $A(Z_\alpha,\epsilon_\alpha) \stackrel{\mathcal{D}}{\rightarrow} \text{Argmin}(Z)$ in $(\mathcal{F}, \tau_{uF}).$
Thus, an application of Theorem 2.9 of Ferger \cite{Ferger2} yields the assertion.
\end{proof}

For Argmin-sets the assumption (\ref{Zeps}) simplifies significantly, because here $\epsilon_\alpha =0$ for all $\alpha \in A$, which is
a special case of the special case in Remark \ref{suffcond} (2).\\

\begin{corollary} \label{Argminsets} Suppose that $Z_n \stackrel{\mathcal{D}}{\rightarrow} Z$ in $(S,\tau_e)$, where $Z$ has at most one minimizing point almost surely. If
for every $\eta>0$ there exists a compact $K \subseteq E$ such that
$$
 \liminf_\alpha \mathbb{P}_\alpha(\{\omega \in \Omega_\alpha: \emptyset \neq \text{Argmin}(Z_\alpha(\omega)) \subseteq K\}) \ge 1-\eta,
$$
then
$$\text{Argmin}(Z_\alpha) \stackrel{\mathcal{D}}{\rightarrow} \text{Argmin}(Z) \text{ in } (\mathcal{F}, \tau_{F}).$$
\end{corollary}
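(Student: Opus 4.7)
The plan is to recognize the corollary as the special case of Theorem \ref{Fellconvergencein distribution} obtained by taking all tolerances equal to zero. I would set $\epsilon_\alpha \equiv 0$ on $\Omega_\alpha$ for every index $\alpha$, and $\epsilon \equiv 0$ on $\Omega$. With these choices $A(Z_\alpha,\epsilon_\alpha) = A(Z_\alpha,0) = \text{Argmin}(Z_\alpha)$ and similarly $A(Z,\epsilon) = \text{Argmin}(Z)$, so once the conclusion of Theorem \ref{Fellconvergencein distribution} is available, it is literally the desired statement.

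The first step is to verify the joint convergence hypothesis (\ref{Zeps}) of Theorem \ref{Fellconvergencein distribution}. By assumption $Z_\alpha \stackrel{\mathcal{D}}{\rightarrow} Z$ in $(S,\tau_e)$, and the deterministic net $\epsilon_\alpha \equiv 0$ trivially converges in distribution to the deterministic constant $\epsilon \equiv 0$ in $(\mathbb{R}_+,\mathcal{T}_n)$. Since the limit $\epsilon$ is almost surely constant, Remark \ref{suffcond}(1) (or, equivalently, the deterministic instance of Remark \ref{suffcond}(2)) yields
$$
 (Z_\alpha,\epsilon_\alpha) \stackrel{\mathcal{D}}{\longrightarrow} (Z,\epsilon) \quad \text{in } (S \times \mathbb{R}_+, \tau_e \times \mathcal{T}_n),
$$
and because $\tau_e \times \mathcal{T}_n \supseteq \tau_e \times \mathcal{T}$ the same convergence holds with respect to the coarser product topology $\tau_e \times \mathcal{T}$, which is precisely (\ref{Zeps}).

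The second step is to check the two remaining hypotheses of Theorem \ref{Fellconvergencein distribution}. The compactness-tightness condition for the sets $A(Z_\alpha,\epsilon_\alpha)$ coincides term-by-term with our assumed inequality, since $A(Z_\alpha,\epsilon_\alpha) = \text{Argmin}(Z_\alpha)$. The requirement $\mathbb{P}(Z \in \textbf{F}_{0,1}) = 1$ is exactly the assumption that $Z$ has at most one minimizing point almost surely.

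With all hypotheses in place, Theorem \ref{Fellconvergencein distribution} produces $A(Z_\alpha,\epsilon_\alpha) \stackrel{\mathcal{D}}{\rightarrow} \text{Argmin}(Z)$ in $(\mathcal{F},\tau_F)$, which by the identification $A(Z_\alpha,0) = \text{Argmin}(Z_\alpha)$ is exactly the desired conclusion. There is essentially no obstacle here: the only nontrivial bookkeeping is the passage through the Slutsky-type Remark \ref{suffcond} to promote the marginal convergence of $(Z_\alpha)$ to the joint convergence required by Theorem \ref{Fellconvergencein distribution}, and that is immediate because the $\epsilon$-components are deterministic.
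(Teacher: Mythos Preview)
Your proposal is correct and follows essentially the same approach as the paper: the corollary is deduced as the special case of Theorem \ref{Fellconvergencein distribution} with $\epsilon_\alpha \equiv 0$, and the joint convergence (\ref{Zeps}) is obtained via Remark \ref{suffcond} (the paper singles out part (2), the deterministic-net case, whereas you invoke part (1); both work here). Your write-up simply spells out in full the details the paper leaves implicit in the sentence preceding the corollary.
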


\vspace{0.4cm}
In applications one is often more interested in single $\epsilon$-optimal solutions. So, for each $\alpha \in A$ let $\xi_\alpha: (\Omega_\alpha,\mathcal{A}_\alpha,\mathbb{P}_\alpha) \rightarrow (E,\mathcal{B(E)})$ be a measurable map. Such a random variable in $E$ is called \emph{measurable selection} of $A(Z_\alpha,\epsilon_\alpha)$ if $\xi_\alpha \in A(Z_\alpha,\epsilon_\alpha) \;
\mathbb{P}_\alpha$-almost-surely. Since $A(Z_\alpha,\epsilon_\alpha)$ is a random closed set by Corollary \ref{Aisrandomclosedset}, it follows from the Fundamental Selection Theorem, confer Molchanov \cite{Molchanov} on p. 77,
that measurable selections exist.\\

%\vspace{0.5cm}
\begin{theorem} \label{mbselections} Assume that (\ref{Zeps}) holds and that for every $\eta>0$ there exists a compact $K \subseteq E$ with
$$
 \liminf_\alpha \mathbb{P}_\alpha(\{\omega \in \Omega_\alpha: \xi_\alpha(\omega) \in K\}) \ge 1-\eta.
$$
Then
\begin{equation} \label{PortmanteauforT}
 \limsup_\alpha  \mathbb{P}_\alpha(\{\omega \in \Omega_\alpha: \xi_\alpha(\omega) \in F\}) \le T(F) \quad \text{for all closed sets } F \text{ in }E,
\end{equation}
where $T:\mathcal{B}(E) \rightarrow [0,1]$ is the \emph{capacity functional} of $A(Z,\epsilon)$, i.e.
$$
 T(B)=\mathbb{P}(A(Z,\epsilon) \cap B \neq \emptyset) \quad \text{ for all Borel-sets } B \in \mathcal{B}(E).
$$
If $\epsilon=0$ and Argmin$(Z) \subseteq \{\xi\} \; \mathbb{P}$-almost surely for some random variable $\xi$ on $(\Omega,\mathcal{A},\mathbb{P})$ with values in $E$, then
$$
 \xi_\alpha \stackrel{\mathcal{D}}{\rightarrow} \xi \quad \text{in } (E,\mathcal{G}).
$$
\end{theorem}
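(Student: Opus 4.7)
The plan is to prove the capacity upper bound (\ref{PortmanteauforT}) by combining the fact that $\xi_\alpha$ is a measurable selection of $A(Z_\alpha,\epsilon_\alpha)$ with a tightness truncation that makes the relevant hitting set compact, and then to invoke Theorem \ref{epidistA}. The distributional convergence $\xi_\alpha \stackrel{\mathcal{D}}{\rightarrow} \xi$ will then follow from (\ref{PortmanteauforT}), the uniqueness assumption on $\text{Argmin}(Z)$, and the Portmanteau theorem in the metrizable carrier space $E$.

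For (\ref{PortmanteauforT}), I fix a closed $F \subseteq E$ and $\eta > 0$, and choose a compact $K$ with $\liminf_\alpha \mathbb{P}_\alpha(\xi_\alpha \in K) \ge 1-\eta$ as furnished by the hypothesis. Because $\xi_\alpha \in A(Z_\alpha,\epsilon_\alpha)$ holds $\mathbb{P}_\alpha$-a.s., the event $\{\xi_\alpha \in F \cap K\}$ is contained modulo a null set in $\{A(Z_\alpha,\epsilon_\alpha) \cap (F \cap K) \neq \emptyset\}$, and crucially $F \cap K$ is compact. Splitting $\{\xi_\alpha \in F\} \subseteq \{\xi_\alpha \in F \cap K\} \cup \{\xi_\alpha \notin K\}$ and applying Theorem \ref{epidistA} to the singleton family $\mathcal{K}^* = \{F \cap K\}$ gives
\begin{align*}
 \limsup_\alpha \mathbb{P}_\alpha(\xi_\alpha \in F)
 &\le \limsup_\alpha \mathbb{P}_\alpha\bigl(A(Z_\alpha,\epsilon_\alpha) \cap (F \cap K) \neq \emptyset\bigr) + \limsup_\alpha \mathbb{P}_\alpha(\xi_\alpha \notin K) \\
 &\le \mathbb{P}\bigl(A(Z,\epsilon) \cap (F \cap K) \neq \emptyset\bigr) + \eta
  \;\le\; T(F) + \eta,
\end{align*}
where the final inequality uses monotonicity of $T$ together with $F \cap K \subseteq F$. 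Letting $\eta \downarrow 0$ yields (\ref{PortmanteauforT}).

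For the second claim, set $\epsilon = 0$ and work on the probability-one event on which $\text{Argmin}(Z) \subseteq \{\xi\}$. On this event any element of $\text{Argmin}(Z) \cap F$ must coincide with $\xi$, so $\{\text{Argmin}(Z) \cap F \neq \emptyset\} \subseteq \{\xi \in F\}$ modulo a null set, and hence $T(F) = \mathbb{P}(\text{Argmin}(Z) \cap F \neq \emptyset) \le \mathbb{P}(\xi \in F)$. Combining this with (\ref{PortmanteauforT}) produces $\limsup_\alpha \mathbb{P}_\alpha(\xi_\alpha \in F) \le \mathbb{P}(\xi \in F)$ for every closed $F \subseteq E$, so the Portmanteau theorem, valid since $(E,\mathcal{G})$ is metrizable, delivers $\xi_\alpha \stackrel{\mathcal{D}}{\rightarrow} \xi$ in $(E,\mathcal{G})$. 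The main obstacle is the first step: Theorem \ref{epidistA} only yields a one-sided upper-Fell statement, so I must avoid any argument that requires hitting functionals on non-compact sets, which is precisely what the tightness-based truncation to the compact piece $F \cap K$ achieves.
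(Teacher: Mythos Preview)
Your proof is correct and follows the same strategy as the paper, which first obtains $A(Z_\alpha,\epsilon_\alpha)\stackrel{\mathcal{D}}{\rightarrow} A(Z,\epsilon)$ in $(\mathcal{F},\tau_{uF})$ via Theorem~\ref{epidistA} and then invokes Corollary~3.7 of Ferger~\cite{Ferger2} as a black box. What you have written is essentially a self-contained derivation of that external corollary: the tightness truncation to the compact piece $F\cap K$, the selection inclusion $\{\xi_\alpha\in F\cap K\}\subseteq\{A(Z_\alpha,\epsilon_\alpha)\cap(F\cap K)\neq\emptyset\}$, and the application of the upper-Fell bound (\ref{K}) to the singleton family $\{F\cap K\}$ are precisely the ingredients behind that result. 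Your treatment of the second assertion via $T(F)\le\mathbb{P}(\xi\in F)$ and the Portmanteau theorem is likewise the standard route. The only difference is packaging: the paper defers to an external reference, whereas you unpack the argument explicitly, which makes your version more transparent and independent of \cite{Ferger2}.
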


\begin{proof} $A(Z_\alpha,\epsilon_\alpha) \stackrel{\mathcal{D}}{\rightarrow} A(Z,\epsilon)$ in $(\mathcal{F}, \tau_{uF})$ by Theorem \ref{epidistA}. Now, the assertion follows from Corollary 3.7
of Ferger \cite{Ferger2}.
\end{proof}

\begin{remark} \label{convergencetoset} Note that (\ref{PortmanteauforT}) looks exactly like the corresponding characterisation in the Portmanteau theorem, except for the fact that $T$ is generally not a probability measure, but only a Choquet-capacity.
On the other hand, $T$ uniquely determines the distribution of the random closed set $A(Z,\epsilon)$. It is therefore reasonable to say that
the \underline{points} $\xi_\alpha$ converge in distribution to the \underline{set} $A(Z,\epsilon)$.
\end{remark}

\vspace{0.3cm}
\begin{remark} \label{notequaltotheemptyset}
If we choose $F:=E \in \mathcal{F}$ in (\ref{PortmanteauforT}), then we obtain that $T(E)=1$. But $T(E)=\mathbb{P}(A(Z,\epsilon) \neq \emptyset)$, whence in particularly Argmin$(Z)=A(Z,0)$ is actually equal to $\{\xi\}$ with probability one.
\end{remark}

\section{Appendix}
Recall that $d$ is Vaughan's metric on $E$. Let $u$ be the usual euclidian distance on $\mathbb{R}$. We endow the product $E \times \mathbb{R}$ with the product-metric $d \times u$ defined
by $d \times u((x,\alpha),(y,\beta)):= \max\{d(x,y),|\alpha-\beta|\}$ for points $(x,\alpha)$ and $(y,\beta)$ in $E \times \mathbb{R}.$ It is well-known that this metric (among many others) induces the product-topology on $E \times \mathbb{R}$. The reason for our special choice lies in that
\begin{equation} \label{openballisopenrectangle}
 B_{d \times u}((x,\alpha),r)= B_d(x,r) \times (\alpha-r,\alpha+r),
\end{equation}
so open balls are open rectangles. Similarly,
\begin{equation} \label{closedballisclosedrectangle}
\overline{B}_{d \times u}((x,\alpha),r)= \overline{B}_d(x,r) \times [\alpha-r,\alpha+r].
\end{equation}
In particularly, $\overline{B}_{d \times u}((x,\alpha),r)$ is compact.\\

\begin{lemma} \label{continuityset} Let $P$ be a probability measure on $(S,\mathcal{B}_e)$ and $\phi$ be the homeomorphism (\ref{phi}). If $Q:= P \circ \phi^{-1}$, then for each $(x,\alpha) \in E \times \mathbb{R}$ and $r>0$ the pertaining closed ball
$\overline{B}_{d \times u}((x,\alpha),r)$ is a $Q$-continuity set, if and only if
$$
 P( I_{\overline{B}_d(x,r)} \le r+\alpha) = P( I_{(\overline{B}_d(x,r))^0} < r+\alpha).
$$
\end{lemma}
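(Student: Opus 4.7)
The plan is to rewrite both $Q(\mathcal{H}(\overline{B}_{d \times u}((x,\alpha),r)))$ and $Q(\mathcal{H}((\overline{B}_{d \times u}((x,\alpha),r))^0))$ as probabilities of infimum events under $P$, and then invoke the characterization (\ref{Q-continuityset}) to conclude. Throughout, write $K := \overline{B}_{d \times u}((x,\alpha),r)$ for brevity.

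First I would exploit (\ref{closedballisclosedrectangle}), which gives $K = \overline{B}_d(x,r) \times [\alpha-r,\alpha+r]$. Since interiors of products coincide with products of interiors in the product topology, I get $K^0 = (\overline{B}_d(x,r))^0 \times (\alpha-r,\alpha+r)$. Then, since $Q = P \circ \phi^{-1}$ with $\phi(f) = \mathrm{epi}(f)$, I have $Q(\mathcal{H}(K)) = P(\{f \in S : \mathrm{epi}(f) \cap K \neq \emptyset\})$, and likewise for $K^0$. The task reduces to rewriting these hitting events in terms of $I_{\overline{B}_d(x,r)}$ and $I_{(\overline{B}_d(x,r))^0}$.

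For the closed ball $K$: $\mathrm{epi}(f)$ meets $K$ iff there exist $t \in \overline{B}_d(x,r)$ and $a \in [\alpha-r,\alpha+r]$ with $f(t)\le a$; since one may always take $a = \alpha+r$, this is equivalent to the existence of $t \in \overline{B}_d(x,r)$ with $f(t) \le \alpha+r$. By Vaughan's metric the closed ball is compact, and $f$ is lsc, so the infimum $I_{\overline{B}_d(x,r)}(f)$ is attained. Hence this event is exactly $\{I_{\overline{B}_d(x,r)} \le r+\alpha\}$, giving $Q(\mathcal{H}(K)) = P(I_{\overline{B}_d(x,r)} \le r+\alpha)$. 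For $K^0$: $\mathrm{epi}(f)$ meets $K^0$ iff there exist $t \in (\overline{B}_d(x,r))^0$ and $a \in (\alpha-r,\alpha+r)$ with $f(t) \le a$. Since $r>0$, the open interval is nonempty, and such an $a$ exists iff $f(t) < \alpha+r$ (if $f(t) < \alpha+r$, pick any $a$ in the nondegenerate interval $[\max\{f(t),\alpha-r\}, \alpha+r) \cap (\alpha-r,\alpha+r)$; conversely $a < \alpha+r$ forces $f(t) < \alpha+r$). Therefore this event equals $\{I_{(\overline{B}_d(x,r))^0} < r+\alpha\}$, and $Q(\mathcal{H}(K^0)) = P(I_{(\overline{B}_d(x,r))^0} < r+\alpha)$.

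Finally, by (\ref{Q-continuityset}) the set $K$ is a $Q$-continuity set iff $Q(\mathcal{H}(K)) = Q(\mathcal{H}(K^0))$, and substituting the two identities above yields the desired equivalence $P(I_{\overline{B}_d(x,r)} \le r+\alpha) = P(I_{(\overline{B}_d(x,r))^0} < r+\alpha)$. The only subtlety is bookkeeping with strict versus non-strict inequalities in the infimum characterizations of the hitting sets; the closed-ball case requires attainment (compactness plus lsc), while the open case is handled by the fact that $r > 0$ keeps the vertical interval nondegenerate.
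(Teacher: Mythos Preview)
Your proof is correct and follows essentially the same approach as the paper's: both reduce the $Q$-continuity condition via (\ref{Q-continuityset}) to the two hitting-set identities $\{\mathrm{epi}(f)\cap K\neq\emptyset\}=\{I_{\overline{B}_d(x,r)}\le r+\alpha\}$ and $\{\mathrm{epi}(f)\cap K^0\neq\emptyset\}=\{I_{(\overline{B}_d(x,r))^0}< r+\alpha\}$, using compactness plus lower semicontinuity for attainment in the first and the nondegeneracy $r>0$ in the second. The only point you leave implicit is that $K=\overline{B}_{d\times u}((x,\alpha),r)$ itself lies in $\mathcal{K}$ (needed to invoke (\ref{Q-continuityset})), but this is immediate from (\ref{closedballisclosedrectangle}) as a product of compact sets.
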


\begin{proof} We know that every closed ball $\overline{B}_{d \times u}((x,\alpha),r)$ is compact, whence by (\ref{Q-continuityset}) it is a $Q$-continuity set if and only if
\begin{eqnarray} \label{Pphicontinuityset}
 & &P(\{f \in S: \text{epi}(f) \in \mathcal{H}(\overline{B}_d(x,r) \times [\alpha-r,\alpha+r])\}) \nonumber\\
  &=& P(\{f \in S: \text{epi}(f) \in \mathcal{H}((\overline{B}_d(x,r) \times [\alpha-r,\alpha+r])^0)\}).
\end{eqnarray}
Now,
\begin{equation} \label{epi1}
 \text{epi}(f) \cap (\overline{B}_d(x,r) \times [\alpha-r,\alpha+r]) \neq \emptyset \; \Leftrightarrow \; I_{\overline{B}_d(x,r)}(f) \le r+\alpha.
\end{equation}
To see this, let $(y,t) \in E \times \mathbb{R}$ such that $t \ge f(y), y \in \overline{B}_d(x,r)$ and $t \in [\alpha-r,\alpha+r]$.
Then $I_{\overline{B}_d(x,r)}(f) \le f(y) \le t \le r+r.$ Conversely, assume that $I_{\overline{B}_d(x,r)}(f) \le r+s$. Since
$\overline{B}_d(x,r)$ is compact and $f$ is lsc, there exists some $z \in \overline{B}_d(x,r)$ with $f(z)=I_{\overline{B}_d(x,r)}(f).$
Consequently, $(z,r+\alpha) \in \text{epi}(f)$. Moreover, $(z,r+\alpha) \in \overline{B}_d(x,r) \times [\alpha-r,\alpha+r]$ as can be seen immediately.
Thus, we have shown (\ref{epi1}).
Further,
\begin{equation} \label{epi2}
 \text{epi}(f) \cap (\overline{B}_d(x,r) \times [\alpha-r,\alpha+r])^0 \neq \emptyset \; \Leftrightarrow \; I_{(\overline{B}_d(x,r))^0}(f) < r+\alpha.
\end{equation}
For the proof of necessity $\Rightarrow$, first observe that $(\overline{B}_d(x,r) \times [\alpha-r,\alpha+r])^0=(\overline{B}_d(x,r))^0 \times (\alpha-r,\alpha+r)$.
Let $(y,t) \in E \times \mathbb{R}$ with $t \ge f(y), y \in (\overline{B}_d(x,r))^0$ and $t \in (\alpha-r,\alpha+r)$. Then
$I_{(\overline{B}_d(x,r))^0)}(f) \le f(y) \le t < r+\alpha$.

As to sufficiency $\Leftarrow$, put $i:= I_{(\overline{B}_d(x,r))^0}(f).$
We know that $i< \alpha+r$. Then there exists some $u \in (\alpha-r,\alpha+r)$ with $i<u$, because otherwise $i \ge \alpha+r-\epsilon$ for all $\epsilon \in (0,2r)$ and taking the limit $\epsilon \downarrow 0$ yields that $i \ge \alpha+r$, a contradiction. Therefore, there exists some $v \in (\overline{B}_d(x,r))^0$ such that $f(v)< u$, whence
\begin{align*}
(v,u) \in \text{epi}(f) \cap (\overline{B}_d(x,r))^0 \times (\alpha-r,\alpha+r)\\
    = \text{epi}(f) \cap (\overline{B}_d(x,r) \times [\alpha-r,\alpha+r])^0 \neq \emptyset
\end{align*}
as desired.

From (\ref{Pphicontinuityset}) in combination with (\ref{epi1}) and (\ref{epi2}) the assertion follows.
\end{proof}

\vspace{1cm}
\textbf{Declarations}\\

\textbf{Compliance with Ethical Standards}: I have read and I understand the provided information.\\

\textbf{Competing Interests}: The author has no competing interests to declare that are relevant to the content of
this article.

%\bibliography{sn-bibliography}% common bib file

\begin{thebibliography}{22}



\bibitem{Attouch} H. Attouch, \textit{Variational Convergence for Functions and Operators}, Boston, London, Melbourne: Applicable Mathematics Series, Pitmann Publishing Limited, 1984.

%\bibitem{Beer} G. Beer, \textit{Topologies on Closed and Closed Convex Sets}, Dordrecht, The Netherlands: Kluwer Academic Publishers, 1993.

\bibitem{Bill} P. Billingsley, \textit{Convergence of Probability Measures}, Second Edition, New York: John Wiley \& Sons, 1999.

%\bibitem{Engel} R. Engelking, \textit{General Topology, Revised and completed edition}, Berlin: Heldermann-Verlag, 1989.

\bibitem{Fell} J. Fell, \textit{A Hausdorff topology for the closed subsets of a locally compact non-Hausdorff space}, Proc. Amer. Math. Soc. \textbf{13} (1962), 472--476.

\bibitem{Ferger0} D. Ferger, \textit{Weak convergence on topological hyperspaces and random closed sets}, Lecture notes, Technische Universit\"{a}t Dresden, (2024).

\bibitem{Ferger1} D. Ferger, \textit{A continuous Mapping Theorem for the Argmin-set functional with applications to convex stochastic processes}, Kybernetika \textbf{57} (2021), 426--445.

\bibitem{Ferger2} D. Ferger, \textit{Weak convergence of probability measures on hyperspaces with the upper Fell-topology}, Bull. Iranian. Math. Soc. \textbf{50}, 79 (2024).https://doi.org/10.1007/s41980-024-00927-3

\bibitem{Ferger3} D. Ferger, \textit{Weak convergence of probability measures to Choquet capacity functionals}, Turkish J. Math. \textbf{42} (2018), 1747--1764.

\bibitem{Ferger4} D. Ferger, \textit{Arginf-sets of multivariate cadlag processes and their convergence in hyperspace topologies}, Theory Stoch. Process. \textbf{20 (36)}, 13--41.

\bibitem{Stute} P. G\"{a}nssler and W. Stute, \textit{Wahrscheinlichkeitstheorie}, Berlin, Heidelberg: Springer-Verlag, 1977.

\bibitem{Gersch} O. Gersch, \textit{Convergence in Distribution of Random Closed Sets and Applications in Stability Theory of Stochastic Optimisation}, PhD thesis, Technische Universit\"{a}t Ilmenau, 2007.

\bibitem{Kall} O. Kallenberg, \textit{Foundations of Modern Probability, Third Edition, Volume 2}, Springer Nature Switzerland AG, 2021.

%\bibitem{Matheron} G. Matheron, \textit{Random Sets and Integral Geometry}, New York, London, Sydney, Toronto: John Wiley \& Sons, 1975.

\bibitem{Molchanov} I. Molchanov, \textit{Theory of random sets, Second Edition}, London: Springer-Verlag, 2017.

\bibitem{Norberg} T. Norberg, \textit{Convergence and existence of random setdistributions}, Th. Probab. Appl. \textbf{12} (1984), 726--732.

\bibitem{Pflug} G. Ch. Pflug, \textit{Asymptotic dominance and confidence for solutions of stochastic programs}, Czechoslovak J. Oper. Res. \textbf{1} (1992), 21--30.

\bibitem{RockWets} R. T. Rockafellar and R. J.-B. Wets, \textit{Variational Analysis}, Berlin, Heidelberg: Springer-Verlag, 1998.

\bibitem{SalWe} G. Salinetti and R.J.-B. Wets, \textit{On the convergence in distribution of measurable multifunctions (random sets), normal integrands, stochastic processes and stochastic infima}, Math. Oper. Res. \textbf{11} (1986), 385--419.

\bibitem{Singh} T. B. Singh, \textit{Introduction to Topology}, Singapore: Springer Nature, 2019.

\bibitem{Weil} R. Schneider and W. Weil, \textit{Stochastic and Integral Geometry}, Berlin, Heidelberg: Springer-Verlag, 2008.

\bibitem{Top} F. Tops{\o}e, \textit{Topology and Measure}, Lecture Notes in Mathematics Vol. 133, Berlin-Heidelberg-New York: Springer-Verlag, 1970.

\bibitem{Vaughan} H. E. Vaughan, \textit{On Locally  Compact Metrisable Spaces}, Bull. Amer. Math. Soc. \textbf{43} (1937), 532--535.

\bibitem{Vogel} S. Vogel, \textit{Semiconvergence in distribution of random closed sets with application to random optimization problems}, Ann. Oper. Res. \textbf{142} (2006), 269--282.
\end{thebibliography}
%% if required, the content of .bbl file can be included here once bbl is generated
%%\input sn-article.bbl

\end{document}